\newtheorem{thm}{Theorem}[section]
\newtheorem{cor}[thm]{Corollary}
\newtheorem{lema}[thm]{Lemma}
\newtheorem{prop}[thm]{Proposition}
\theoremstyle{definition}
\newtheorem{defn}[thm]{Definition}
\theoremstyle{remark}
\newtheorem{rem}[thm]{Remark}
\def\glim{\mathop{\text{\normalfont $\Gamma-$lim}}}
\def\supp{\mathop{\text{\normalfont supp}}}
\def\curl{\mathop{\text{\normalfont curl}}}
\def\diam{\mathop{\text{\normalfont diam}}}
\numberwithin{equation}{section}
\newcommand{\R}{\mathbb R}
\newcommand{\CC}{\mathbb C}
\newcommand{\Sn}{{\mathbb S}^{n-1}}
\newcommand{\N}{\mathbb N}
\newcommand{\dd}{\mathbf d}
\def\C{\mathbf {C}}
\def\J{{\mathcal{J}}}
\def\F{{\mathcal{F}}}
\newcommand{\intr}{\int_{\R^n}}
\newcommand{\ve}{\varepsilon}
\newcommand{\cd}{\rightharpoonup}
\begin{document}
\title{Magnetic Fractional order Orlicz-Sobolev spaces}

\author[J. Fern\'andez Bonder and A.M. Salort]{Juli\'an Fern\'andez Bonder and Ariel M. Salort}

\address{Departamento de Matem\'atica, FCEyN - Universidad de Buenos Aires and
\hfill\break \indent IMAS - CONICET
\hfill\break \indent Ciudad Universitaria, Pabell\'on I (1428) Av. Cantilo s/n. \hfill\break \indent Buenos Aires, Argentina.}

\email[J. Fern\'andez Bonder]{jfbonder@dm.uba.ar}
\urladdr{http://mate.dm.uba.ar/~jfbonder}

\email[A.M. Salort]{asalort@dm.uba.ar}
\urladdr{http://mate.dm.uba.ar/~asalort}


\subjclass[2010]{46E30, 35R11, 45G05}

\keywords{Fractional order Sobolev spaces, Orlicz-Sobolev spaces, $g-$laplace operator}

\begin{abstract}
In this paper  we define the notion of nonlocal magnetic Sobolev spaces with non-standard growth for Lipschitz magnetic fields. In this context we prove a  Bourgain - Brezis - Mironescu type formula for functions in this space as well as for sequences of functions. Finally, we deduce some consequences such as the $\Gamma-$convergence of modulars and convergence of solutions for some non-local magnetic Laplacian allowing non-standard growth laws to its local counterpart.
\end{abstract}

\maketitle
 

\section{Introduction}
The magnetic Laplacian $\Delta^A:=(\nabla -iA)^2$ plays a fundamental role in the description of particles interacting with a magnetic field  $B=\curl(A)$, where $A\colon\R^3 \to \R^3$ is the magnetic potential.

This operator can be seen as the gradient of the convex functional
$$
I^A(u) := \frac12 \int_\Omega |\nabla u - i Au|^2\, dx,
$$
in the sense that the solution $u\colon \Omega\to\CC$  to the problem
$$
-\Delta^A u = f \text{ in } \Omega,\quad u=0 \text{ on } \partial\Omega,
$$
is the unique minimizer of
$$
J^A(u) := \frac12 \int_\Omega |\nabla u - i A u|^2\, dx - \int_\Omega \Re(f \bar u)\, dx, 
$$
where for a complex number $z\in\CC$, we denote by $\Re z$ and $\Im z$ the real and imaginary parts of $z$, and $\bar z$ denotes the complex conjugate of $z$.

Several nonlinear generalizations have been studied in the past years, such as the magnetic $p-$Laplace operator ($1<p<\infty$), denoted by $-\Delta_p^A$ and defined as the gradient of the convex functional
$$
I_p^A(u) := \frac{1}{p} \int_\Omega |\nabla u - i Au|_p^p\, dx,
$$
where, for $z\in\CC^n$, $|z|_p^p := |\Re z|^p + |\Im z|^p$, $|\cdot|$ is the euclidean norm in $\R^n$ and $\Re z$, $\Im z$ denotes the real an imaginary parts of $z$ respectively. See for instance \cite{LZ} for existence results for $-\Delta_p^A$ and  \cite{BGKSM} for  this operator in the context of  graphs.

Again, the solution to
$$
-\Delta_p^A u = f \text{ in } \Omega,\quad u=0 \text{ on } \partial\Omega,
$$
is the unique minimizer of
$$
J_p^A(u) := \frac1p \int_\Omega |\nabla u - i A u|_p^p\, dx - \int_\Omega \Re(f \bar u)\, dx. 
$$

On the other hand, when studying phenomena allowing behaviors more general than power laws,  such as anisotropic fluids with flows obeying nonstandard rheology \cite{GSW,WK} or capillarity phenomena, these magnetic operators need to be extended to consider nonstandard growth different that powers or different behaviors near zero and near infinity. In these cases, Orlicz-Sobolev spaces become the natural framework to deal with. 

Given an Orlicz function $G\colon \R_+\to\R_+$ (see next section for precise definitions), and $g=G'$, the magnetic $g-$Laplace operator is defined as the gradient of the functional
\begin{equation}\label{IGA}
I_G^A(u):= \int_\Omega \left(G(|\Re(\nabla u + iAu)|) + G(|\Im(\nabla u + iAu)|)\right)\, dx
\end{equation}
and again, the solution to
$$
-\Delta_g^A u = f \text{ in } \Omega,\quad u=0 \text{ on } \partial\Omega,
$$
is the unique minimizer of
$$
J_G^A(u) := I_G^A(u) - \int_\Omega \Re(f \bar u)\, dx. 
$$

In the last decades there has been an increasing interest in the study of equations driven by nonlocal operators since they arise naturally in many important problems of nature. This fact leaded up to consider operators describing nonlocal magnetic phenomena. For instance, in the  mid 80s a fractional relativistic generalization of the magnetic Laplacian in $\R^n$ was introduced in \cite{Ic1,Ic3}, \cite[Section 3.1]{Ic2} by means of the so-called Weyl pseudo-differential operator defined with mid-point prescription
$$
\mathcal{H}_A u(x)=\frac{1}{(2\pi)^n}\int_{\R^{2n}} e^{i(x-y) \left(\xi +A\left(\tfrac{x+y}{2}\right)\right)} \sqrt{|\xi|^2+ m^2} u(y)\,dyd\xi,
$$
where here, $A\colon \R^n\to \R^n$ is a measurable function.

When $m=0$, in \cite[Eq. (3.7)]{Ic2} it is shown that for $u\in C_c^\infty(\R^n,\CC)$ the expression above can be written as
$$
\mathcal{H}_A u(x)=\frac{\Gamma\big(\tfrac{n+1}{2}\big)}{\pi^\frac{n+1}{2}}\, p.v. \int_{\R^n} \left( u(x)-e^{i(x-y) A\left(\tfrac{x+y}{2}\right)} u(y) \right) |x-y|^{-(n+1)}\,dy.
$$
Furthermore, in \cite{AvSq}, this nonlocal operator was generalized to admit a family of kernels depending on a parameter $s\in(0,1)$ as
$$
(-\Delta^A)^s u := c_{n,s}\, p.v. \int_{\R^n} \frac{u(x)-e^{i(x-y) A\left(\tfrac{x+y}{2}\right)}u(y) }{|x-y|^{n+2s}}\,dy, \qquad x\in\R^n,
$$
which recovers the expression of $\mathcal{H}_A$ for $s=\frac12$ and also recovers the fractional Laplacian when $A\equiv 0$. This operator is the gradient of the functional
$$
I_s^A(u) := \frac12 \iint_{\R^n\times\R^n} \frac{|u(x)-e^{i(x-y) A\left(\tfrac{x+y}{2}\right)}u(y)|^2}{|x-y|^{n+2s}}\, dxdy,
$$
up to some normalization constant.

The connection of this magnetic fractional laplacian $(-\Delta^A)^s$ with the classical magnetic laplacian $-\Delta^A$ was provided in \cite{SquassinaVolzone} where it is proved that their corresponding energies converge as the fractional parameter $s$ converges to 1, much in the spirit of the celebrated result of Bourgain-Brezis-Mironescu (BBM for short). See \cite{BBM}.

Recently, in \cite{NPSV} the authors introduce a fractional version of the magnetic $p-$Laplacian $-\Delta_p^A$. The magnetic fractional $p-$Laplacian considered in \cite{NPSV}, denoted by $(-\Delta_p^A)^s$, is defined as the gradient of the functional
$$
I_{s,p}^A(u) := \frac{1}{p} \iint_{\R^n\times\R^n} \frac{|u(x)-e^{i(x-y) A\left(\frac{x+y}{2}\right)} u(y)|_p^p}{|x-y|^{n+sp}}\, dxdy, \qquad x\in\R^n.
$$
Observe that for $p=2$ this definition agrees with the one given for $(-\Delta^A)^s$ and in this case, when the parameter $s$ converges to 1, one recovers the magnetic $p-$Laplace operator $-\Delta_p^A$. See \cite{NPSV} for the details.

The purpose in this work is the analysis of a fractional version of the magnetic $g-$Laplace operator $(-\Delta_g^A)^s$ and the study of the limit as the fractional parameter $s$ goes to 1.

This problem in the case of zero magnetic potential (i.e. $A=0$) was addressed in \cite{FBS}. In that paper, the authors introduced what they called the {\em fractional order Orlicz-Sobolev spaces}, as
$$
W^{s,G}(\R^n) := \left\{u\in L^G(\R^n)\colon I_{s,G}(u)<\infty \right\},
$$
for $0<s<1$, where 
$$
I_{s,G}(u) :=  \iint_{\R^n\times\R^n} G\left(\frac{ |u(x)-u(y)| }{|x-y|^s}\right) \frac{dxdy}{|x-y|^n}
$$
and
$$
L^G(\R^n) := \left\{u\in L^1_{\text{loc}}(\R^n)\colon \int_{\R^n} G(|u|) \, dx <\infty\right\}.
$$

Then, in \cite{FBS}, they went on to define the fractional $g-$Laplace operator $(-\Delta_g)^s$ as the gradient of the functional $I_{s,G}$ and prove the convergence of this fractional operator to the (by now) classical $g-$Laplace operator $-\Delta_g$.

To this end, we consider a Lipschitz magnetic potential $A\colon \R^n\to \R^n$ and an Orlicz function $G$. Then, the \emph{magnetic fractional $g-$Laplace operator} $(-\Delta_g^A)^s$  is defined as the gradient of the non-local energy functional
\begin{equation}\label{IsGA}
I_{s,G}^A(u) := \iint_{\R^n\times\R^n} \left(G\left(|\Re(D_s^A u(x,y))|\right) + G\left(|\Im(D_s^A u(x,y))|\right)\right)\, \frac{dxdy}{|x-y|^n},
\end{equation}
where $D_s^A u(x,y)$ is the magnetic H\"older quotient of order $s$ defined as
\begin{equation}\label{DsA}
D_s^A u(x,y) := \frac{u(x)-e^{i(x-y) A\left(\frac{x+y}{2}\right)} u(y)}{|x-y|^s}.
\end{equation}
Observe that when $G(t) = \frac{1}{p} t^p$ we recover the functional $I_{s,p}^A$.

In this manuscript we will be interested in the behavior of   $I_{s,G}^A$ as $s\uparrow 1$ and its connection with the local energy functional $I_G^A$ which is closely related with  the  magnetic $g-$Laplace operator $-\Delta_g^A$ as we mentioned above.

Our first result states a magnetic Bourgain-Brezis-Mironescu identity for fractional Orlicz-Sobolev functions.

To this end, following \cite{FBS}, given an Orlicz function $G$, we define its {\em spherical limit} as
\begin{equation} \label{phitilde} \tag{S}
\tilde G(a):=\lim_{s\uparrow 1} (1-s)\int_0^1  \int_{\Sn}   G\left( a |z_N| r^{1-s}\right)   dS_z \frac{dr}{r}.
\end{equation}
provided that this limit exists.

\begin{thm} \label{main1}
Let $1<p_-\le p_+<\infty$ be fixed and let $G$ be an Orlicz function satisfying the growth condition
\begin{equation} \label{cond.intro} \tag{L}
p_-\le \frac{tG'(t)}{G(t)}\le p_+, 
\end{equation}
and such that the limit in \eqref{phitilde} exists. Let $A\colon\R^n\to \R^n$ be a Lipschitz continuous function. Then, for any $u\in L^G(\R^n;\CC)$, it holds that
\begin{equation} \label{bbm}
\lim_{s\uparrow 1} (1-s) I_{s,G}^A(u) = I_{\tilde G}^A(u),
\end{equation}
where $\tilde G$ is defined by \eqref{phitilde}.
\end{thm}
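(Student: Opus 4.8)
The plan is to follow the scheme of Bourgain--Brezis--Mironescu and of its Orlicz counterpart in \cite{FBS}: establish \eqref{bbm} first for $u\in C_c^2(\R^n;\CC)$ through a pointwise analysis of the magnetic quotient near the diagonal, and then reach a general $u$ by density and convexity arguments. Two elementary consequences of \eqref{cond.intro} will be used repeatedly: that $G(\lambda t)\le\lambda^{p_-}G(t)$ for $0<\lambda\le1$ and $G(\lambda t)\le\lambda^{p_+}G(t)$ for $\lambda\ge1$, which give that $G$ and $\tilde G$ obey the $\Delta_2$ condition and that, for each $\delta>0$, there is $C_\delta>0$ with $G(a+b)\le(1+\delta)G(a)+C_\delta G(b)$ for all $a,b\ge0$; and that, writing $r=|x-y|$, the change of variable $\rho=r^{1-s}$ yields $(1-s)\int_0^1 G(a\,r^{1-s})\,\tfrac{dr}{r}=\int_0^1 G(a\rho)\,\tfrac{d\rho}{\rho}<\infty$, which is the mechanism behind the spherical limit \eqref{phitilde}; combined with the rotational invariance of $\Sn$ (so that $\int_{\Sn}G(a|\omega\cdot e|)\,dS=\int_{\Sn}G(a|z_N|)\,dS$ for any unit $e$) it also shows $\tilde G$ is again an Orlicz function satisfying \eqref{cond.intro}.

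\emph{The smooth case.} For $u\in C_c^2(\R^n;\CC)$, combining $|e^{it}-1-it|\le t^2/2$, the second order expansion of $u$, and the Lipschitz estimate $|A(\tfrac{x+y}{2})-A(x)|\le\tfrac12\mathrm{Lip}(A)\,|x-y|$ — here the midpoint prescription in \eqref{DsA} is crucial, since it renders the phase error \emph{quadratic} in $|x-y|$ even though $A$ is merely Lipschitz — one obtains
\begin{equation*}
u(x)-e^{i(x-y)A(\frac{x+y}{2})}u(y)=(x-y)\cdot(\nabla u-iAu)(x)+R(x,y),\qquad|R(x,y)|\le C_0|x-y|^2,
\end{equation*}
where $C_0$ depends only on $u$ and $A$ through quantities finite on a compact neighbourhood of $\supp u$, and both sides vanish once $x$ lies at distance $\ge1$ from $\supp u$ (cf. the functional in \eqref{IGA}). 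Hence for $|x-y|\le1$, $D_s^A u(x,y)=r^{1-s}\,\tfrac{x-y}{r}\cdot(\nabla u-iAu)(x)+O(r^{2-s})$. Using the monotonicity of $G$ and the convexity inequality to absorb the $O(r^{2-s})$ term (its contribution bounded by $C_\delta(1-s)\int_0^1 G(C_0 r^{2-s})\tfrac{dr}{r}=C_\delta\tfrac{1-s}{2-s}\int_0^1 G(C_0\rho)\tfrac{d\rho}{\rho}\to0$), passing to polar coordinates, and invoking the rotational invariance of $\Sn$, one gets for every $x$
\begin{equation*}
\lim_{s\uparrow1}(1-s)\int_{|x-y|\le1}G\big(|\Re D_s^A u(x,y)|\big)\frac{dy}{|x-y|^n}=\tilde G\big(|\Re(\nabla u-iAu)(x)|\big),
\end{equation*}
and likewise with $\Im$. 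On $\{|x-y|>1\}$ one has $|D_s^A u(x,y)|\le(|u(x)|+|u(y)|)|x-y|^{-s}$, so the convexity inequality, $G(\lambda t)\le\lambda^{p_-}G(t)$ and Fubini bound the corresponding contribution by $O((1-s)\|G(2|u|)\|_{L^1})\to0$. Since the left-hand side above is dominated, uniformly in $s\in(\tfrac12,1)$, by a multiple of $\int_0^1 G((M_0+C_0)\rho)\tfrac{d\rho}{\rho}$ times the characteristic function of a fixed compact set ($M_0=\|\nabla u-iAu\|_{L^\infty}$), which lies in $L^1(\R^n)$, dominated convergence gives \eqref{bbm} for $u\in C_c^2$.

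\emph{Passage to a general $u$.} We prove the two inequalities in \eqref{bbm} separately, with the convention that $I_{\tilde G}^A(u)=+\infty$ when $\nabla u$ is not a function or $\nabla u-iAu\notin L^G$. For the $\limsup$ bound we may assume $I_{\tilde G}^A(u)<\infty$ and pick $u_k\in C_c^\infty(\R^n;\CC)$ with $u_k\to u$ and $\nabla u_k-iAu_k\to\nabla u-iAu$ in $L^G$ (possible since $A$ is locally bounded and $G\in\Delta_2$), so $I_{\tilde G}^A(u_k)\to I_{\tilde G}^A(u)$, $I_{\tilde G}^A(u-u_k)\to0$ and $\|G(|u-u_k|)\|_{L^1}\to0$. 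As $v\mapsto D_s^A v$ is linear, the convexity inequality gives $(1-s)I_{s,G}^A(u)\le(1+\delta)(1-s)I_{s,G}^A(u_k)+C_\delta(1-s)I_{s,G}^A(u-u_k)$ and its symmetric counterpart. By the smooth case $(1-s)I_{s,G}^A(u_k)\to I_{\tilde G}^A(u_k)$; and for the remainder we use the a priori bound $\sup_{s\in(1/2,1)}(1-s)I_{s,G}^A(w)\le C\big(I_{\tilde G}^A(w)+\|G(|w|)\|_{L^1}\big)$, valid for every $w\in L^G$ with $I_{\tilde G}^A(w)<\infty$: integrating the magnetic connection along the segment $[y,x]$ gives
\begin{equation*}
\big|u(x)-e^{i(x-y)A(\frac{x+y}{2})}u(y)\big|\le|x-y|\int_0^1\big|(\nabla u-iAu)(y+t(x-y))\big|\,dt+C\,\mathrm{Lip}(A)\,|x-y|^2\,|u(y)|,
\end{equation*}
the last term accounting for the difference between the segment average of $A$ and its midpoint value, and then Jensen's inequality, the change of variable $\rho=r^{1-s}$ and Fubini yield the stated bound. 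Sending $s\uparrow1$, then $k\to\infty$, then $\delta\downarrow0$ gives $\limsup_{s\uparrow1}(1-s)I_{s,G}^A(u)\le I_{\tilde G}^A(u)$. The $\liminf$ bound $\liminf_{s\uparrow1}(1-s)I_{s,G}^A(u)\ge I_{\tilde G}^A(u)$, valid for every $u\in L^G$, follows from the smooth case by a lower semicontinuity argument: restricting to $\{|x-y|\le1\}$ and to balls and applying Fatou in $x$ together with the Lebesgue differentiation theorem, after comparing the magnetic quotient with the non-magnetic one (the discrepancy being of lower order, controlled by $\mathrm{Lip}(A)$ on compacts) and invoking the characterization of $W^{1,G}$ from \cite{FBS}; in particular, when $I_{\tilde G}^A(u)=+\infty$ this forces $(1-s)I_{s,G}^A(u)\to+\infty$. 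Combining both inequalities yields \eqref{bbm}.

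\emph{Main difficulty.} Conceptually, the one new ingredient is showing that the magnetic phase factor produces precisely the first order term $-iAu$ plus a quadratic remainder using only the Lipschitz regularity of $A$; this is taken care of by the midpoint prescription and, for the a priori bound, by the segment representation of the magnetic difference. The bulk of the work lies in the Orlicz bookkeeping: because $G$ is not homogeneous, wherever in the power case one would extract a factor $|x-y|^{sp}$ one must instead argue through \eqref{cond.intro}, and the absorption of all error terms has to be made uniform as $s\uparrow1$ and robust enough to survive both the density argument and the lower semicontinuity step. This interplay between the Orlicz estimates and the BBM machinery, parallel to the non-magnetic analysis of \cite{FBS}, is where I expect most of the technical effort to be spent.
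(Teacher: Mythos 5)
Your proposal is correct in substance and, for the core of the argument, coincides with the paper's proof: the expansion of $y\mapsto e^{i(x-y)A(\frac{x+y}{2})}u(y)$ with quadratic remainder (this is Lemma \ref{lemma.uno}), polar coordinates together with the definition \eqref{phitilde}, the tail estimate on $\{|x-y|\ge 1\}$, dominated convergence for $u\in C^2_c(\R^n;\CC)$, the a priori bound $(1-s)I^A_{s,G}(u)\le C\left(I_G^A(u)+I_G(u)\right)$ obtained by integrating along the segment $[y,x]$ (this is Lemma \ref{teo1}), and the density-plus-convexity passage to general $u\in W^{1,G}_A(\R^n)$ via $G(a+b)\le(1+\delta)G(a)+C_\delta G(b)$ are precisely the steps of the paper. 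The one place where you genuinely diverge is the converse direction. The paper shows that $\liminf_{s\uparrow 1}(1-s)I^A_{s,G}(u)<\infty$ forces $u\in W^{1,G}_A(\R^n)$ by mollifying and truncating (Lemmas \ref{lema.reg} and \ref{lema.trunc}), applying the smooth case to $u_{k,\ve}$, and extracting a weak limit in the reflexive space $W^{1,G}_A(\R^n)$; you instead compare $D_s^A u$ with the non-magnetic quotient and invoke the BBM characterization of $W^{1,G}$ from \cite{FBS}, which also works (it is essentially Lemma \ref{lema.A.0} together with the boundedness of $A$) and is arguably more direct. The caveat is that, as written, you claim this comparison yields the \emph{sharp} bound $\liminf_{s\uparrow 1}(1-s)I^A_{s,G}(u)\ge I^A_{\tilde G}(u)$ for every $u\in L^G(\R^n;\CC)$. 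It cannot: removing the magnetic phase loses a multiplicative constant and produces an additive error of size $(1-s)\cdot\tfrac{1}{1-s}I_G(u)=I_G(u)$, which does not vanish as $s\uparrow 1$. The comparison only gives the qualitative membership $u\in W^{1,G}_A(\R^n)$ when the liminf is finite; the sharp lower bound must then be routed back through the ``symmetric counterpart'' of your convexity inequality in the density step, exactly as the upper bound is. With that bookkeeping made explicit, your argument closes.
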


\begin{rem}\label{rema}
In view of \cite[Proposition 2.16]{FBS}, whenever $\tilde G\colon \R^+\to \R^+$ is well defined, there exist positive constant $c_1$ and $c_2$ such that
$$
c_1 G(t) \leq  \tilde G(t) \leq  c_2  G(t)\quad \text{for every } t>0.
$$
We refer to \cite{FBS} for the explicit computation of $\tilde G$ in some particular examples.
\end{rem}

\begin{rem}
The limit in \eqref{bbm} is understood in the sense that if $u\in W^{1,G}_A(\R^n)$ then the limit is finite and coincides with $I_{\tilde G}^A(u)$ and, if 
$$
\liminf_{s\uparrow 1} I_{s,G}^A(u) < \infty,
$$
then $u\in W^{1,G}_A(\R^n)$ and \eqref{bbm} holds.
\end{rem}

As a consequence of Theorem \ref{main1} we deduce some $\Gamma-$convergence results for the modulars and therefore the convergence of the solutions of the magnetic fractional $g-$Laplace operator to its the local magnetic counterpart. In fact, our result on the convergence of the operators $(-\Delta_g^A)^s$ to $-\Delta_g^A$ reads as follows.
\begin{thm}\label{main3}
Let $G$ be an Orlicz function satisfying \eqref{cond.intro} such that \eqref{phitilde} exists and let $A\colon \R^n\to\R^n$ be a Lipschitz continuous function. Let $G^*$ be the Legendre's transform of $G$, $\Omega\subset \R^n$ a bounded open set and $f\in L^{G^*}(\Omega; \CC)$.

For each $0<s<1$, let $u_s\in L^G(\Omega;\CC)$ be the unique solution to
$$
(-\Delta_g^A)^s u_s = f\ \text{ in }\Omega,\qquad u_s=0\ \text{ in }\R^n\setminus \Omega.
$$
Then $u_s\to u$ as $s\to 1$ in $L^G(\Omega;\CC)$ where $u$ is the unique solution to
$$
-\Delta_{\tilde g}^A u = f\ \text{ in }\Omega,\qquad u=0\ \text{ in } \partial \Omega,
$$
where $\tilde g = \tilde G'$.
\end{thm}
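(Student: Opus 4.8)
The plan is to derive Theorem~\ref{main3} from Theorem~\ref{main1} via a standard $\Gamma$-convergence plus equicoercivity argument applied to the penalized functionals. Define, for $0<s<1$,
\[
\mathcal J_s(u) := (1-s) I_{s,G}^A(u) - \int_\Omega \Re(f\bar u)\,dx, \qquad u\in L^G(\Omega;\CC),
\]
extended by $+\infty$ off the natural domain (with the Dirichlet condition $u=0$ on $\R^n\setminus\Omega$ built in, as in \eqref{IsGA}), and the limit functional
\[
\mathcal J(u) := I_{\tilde G}^A(u) - \int_\Omega \Re(f\bar u)\,dx
\]
on $W^{1,\tilde G}_A(\Omega)$ with $u=0$ on $\partial\Omega$. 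By definition $u_s$ is the unique minimizer of $\mathcal J_s$ and $u$ the unique minimizer of $\mathcal J$; the convexity and strict convexity of $G$ (hence of the modulars, using \eqref{cond.intro} which forces $p_->1$) guarantee uniqueness. So it suffices to show that minimizers of $\mathcal J_s$ converge to the minimizer of $\mathcal J$, and the classical route is: (i) $\Gamma$-convergence $\mathcal J_s \xrightarrow{\Gamma} \mathcal J$ in $L^G(\Omega;\CC)$ as $s\uparrow 1$; (ii) equicoercivity of the family $\{\mathcal J_s\}$; then the fundamental theorem of $\Gamma$-convergence yields convergence of minimizers and of minimum values.

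For the $\Gamma$-convergence I would treat separately the modular part and the (continuous, $s$-independent) linear part, which passes to the limit trivially under $L^G$-convergence since $f\in L^{G^*}(\Omega;\CC)$ and Hölder's inequality in Orlicz spaces applies. For the modulars $(1-s)I_{s,G}^A$, the $\liminf$ inequality ($\Gamma$-$\liminf$) follows from a lower-semicontinuity argument: if $u_{s}\to u$ in $L^G$ with $\liminf (1-s)I_{s,G}^A(u_{s})<\infty$, one uses Fatou along the double integral together with the equivalence $c_1 G\le \tilde G\le c_2 G$ from Remark~\ref{rema} and the pointwise structure of $D_s^A$; more efficiently one can invoke Theorem~\ref{main1} after first establishing a uniform (in $s$) bound implying $u\in W^{1,\tilde G}_A$, exactly as in the second Remark after Theorem~\ref{main1}. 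The $\Gamma$-$\limsup$ inequality needs a recovery sequence: for $u\in W^{1,\tilde G}_A(\Omega)$ one takes the constant sequence $u_s\equiv u$ (suitably interpreted with the zero extension), and Theorem~\ref{main1} gives exactly $(1-s)I_{s,G}^A(u)\to I_{\tilde G}^A(u)$, so no genuine construction is needed — this is the payoff of having proved the BBM identity pointwise in $u$. A small technical point: one must check that smooth compactly supported functions in $\Omega$ (or the zero-extended $W^{1,\tilde G}_A(\Omega)$ functions) are an admissible dense class for which the recovery is valid, and that the Dirichlet constraint is preserved in the limit.

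The main obstacle is equicoercivity, i.e. extracting from a sequence with $\sup_s \mathcal J_s(u_s)<\infty$ a subsequence converging in $L^G(\Omega;\CC)$. From $\mathcal J_s(u_s)\le \mathcal J_s(0)=0$ and the Orlicz-Hölder bound on $\int_\Omega\Re(f\bar u_s)$ one gets $(1-s)I_{s,G}^A(u_s)\le C(1+\|u_s\|_{L^G})$; this must be upgraded to a bound on $\|u_s\|_{L^G}$ itself via a fractional Orlicz Poincaré inequality on the bounded set $\Omega$ that is \emph{uniform as $s\uparrow 1$}, together with a diamagnetic-type inequality allowing one to compare $I_{s,G}^A(u_s)$ from below with the non-magnetic modular $I_{s,G}(|u_s|)$ (pointwise, $|D_s^A u|\ge ||u(x)|-|u(y)||/|x-y|^s$ up to the usual care with real/imaginary parts, so that $G(|\Re D_s^A u|)+G(|\Im D_s^A u|)\gtrsim G(|D_s u|)$ for $|u|$). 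Then uniform compactness of the embedding $W^{s,G}_0(\Omega)\hookrightarrow L^G(\Omega)$ for $s$ near $1$ — the Orlicz analogue of the Bourgain--Brezis--Mironescu compactness criterion, which should be available from the machinery of \cite{FBS} — provides a strongly convergent subsequence for $|u_s|$, and combined with a uniform bound one deduces compactness for $u_s$ in $L^G(\Omega;\CC)$. Once equicoercivity and $\Gamma$-convergence are in hand, the standard theorem gives $u_s\to u$ in $L^G(\Omega;\CC)$ and $\mathcal J_s(u_s)\to\mathcal J(u)$, and uniqueness of the limit identifies $u$ as the solution of $-\Delta_{\tilde g}^A u=f$; since every subsequence has a further subsequence converging to the same $u$, the full family converges.
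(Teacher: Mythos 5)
Your proposal is correct and follows essentially the same route as the paper: $\Gamma$-convergence of the penalized functionals (limsup via the constant recovery sequence using Theorem \ref{main1}, liminf via Fatou plus Theorem \ref{main1} applied to the limit), equicoercivity of the minimizers via the $(1-s)$-scaled magnetic Poincar\'e inequality, the diamagnetic inequality and the BBM-type compactness from \cite{FBS}, and then the fundamental theorem of $\Gamma$-convergence. This is precisely the chain Theorem \ref{teo.gamma.conv} -- Proposition \ref{limsup.prop} -- Lemmas \ref{lema1} and \ref{lema2} -- Theorems \ref{teo.Gamma} and \ref{teo.conv.minimos} in the paper.
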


We observe that this last result seems to be new, even in the magnetic $p-$laplacian setting.

\subsection*{Organization of the paper}
In section 2, we collect some preliminaries on Orlicz functions that will be used throughout the paper, define the magnetic Orlicz-Sobolev spaces and prove some elementary properties of these spaces.

In section 3 we prove some technical results needed in the proof of our main results.

Section 4 is devoted to the proof of Theorem \ref{main1}.

Finally, in section 5, we derive some consequences of Theorem \ref{main1} and, in particular, we show the proof of Theorem \ref{main3}.

\section{Preliminaries} \label{preliminar}
\subsection{Orlicz functions}

We start by recalling the definition of the well-known Orlicz functions.
\begin{defn}\label{defi.Orlicz}
$G\colon \R_+ \to \R_+$ is called an \emph{Orlicz function} if it can be written as
$$
G(x)=\int_0^x g(t)\,dt
$$
where the real-valued function $g$ defined on $\R_+$ is positive, right continuous, nondecreasing and $g(0)=0$ and $g(x)\to \infty$ as $x\to\infty$.
\end{defn}

It is easy to see that an Orlicz function $G$ satisfies the following properties.
\begin{align}
\tag{$P_1$}\label{H1} &G \text{ is Lipschitz continuous, convex,  increasing and }  G(0)=0.\\ 
\tag{$P_2$}\label{P5} & G(ab)\leq bG(a) \text{ for any } 0<b<1, a>0.\\ 
\tag{$P_3$}\label{H3} &G \text{ is super-linear at zero, that is } \lim_{x\downarrow 0} \frac{G(x)}{x} = 0.
\end{align}

We say that an Orlicz function $G$ satisfies the \emph{$\Delta_2$ condition} if there exists $\C>2$ such that 
\begin{equation} \label{H2} \tag{$\Delta_2$}
G(2x)\leq \C G(x) \quad \text{ for all } x\in \R_+.
\end{equation}

From \eqref{H2} it is easy to see that for any $a,b\geq 0$
\begin{equation}  \label{P4}
G(a+b)\leq \tfrac{\C}{2} (G(a)+G(b))
\end{equation}
where $\C$ is the constant in the $\Delta_2$ condition.

In \cite[Theorem 4.1]{KrRu61} it is shown that the $\Delta_2$ condition is equivalent to
$$
\frac{tg(t)}{G(t)}\le p_+
$$
for some $p_+>1$ (then the constant in \eqref{H2} is just $2^{p_+}$).

For most of our computations we will require the stronger hypothesis
\begin{equation}\label{lieberman} \tag{L}
1<p_-\le \frac{tg(t)}{G(t)}\le p_+<\infty.
\end{equation}
The lower inequality in \eqref{lieberman} is easily seen as being equivalent to the $\Delta_2$ condition of the \emph{complementary function} (or {\em Legendre's transform}) of $G$, which is defined as
$$
G^*(s) := \sup_{t>0}\{st-G(t)\}.
$$
Therefore, condition \eqref{lieberman} is equivalent to the fact that both $G$ and $G^*$ satisfy the \ref{H2} condition. Let us recall that this is what is needed in order for the Orlicz space $L^G(\R^n;\CC)$ to be reflexive. See \cite{KrRu61} and the next subsection for definitions and properties of Orlicz spaces.

Moreover, it is easy to check that \eqref{lieberman} implies that
\begin{equation} \label{cotas}
\min\{a^{p_+}, a^{p_-}\} G(b) \leq G(ab)\leq \max\{a^{p_+}, a^{p_-}\} G(b), \qquad a,b\in \R_+.
\end{equation}

\subsection{Magnetic Fractional Orlicz--Sobolev spaces}
Given an Orlicz function $G$, a fractional parameter $0<s< 1$ and a function $A\colon \R^n\to\R^n$,  we consider the spaces $L^G(\R^n;\CC)$ and $W^{s,G}_A(\R^n)$ defined as 
\begin{align*}
&L^G(\R^n;\CC) :=\left\{ u\colon \R^n \to \CC \text{ measurable}\colon  I_{G}(u) < \infty \right\},\\
&W^{s,G}_A(\R^n):=\left\{ u\in L^G(\R^n;\CC)\colon I_{s,G}^A(u)<\infty \right\},
\end{align*}
where the modulars $I_G$ is defined as
$$
I_{G}(u) := \int_{\R^n} G(|\Re u|) + G(|\Im u|)\,dx,
$$
and $I_{s,G}^A$ is defined \eqref{IsGA}.

We also define the space $W^{1,G}_A(\R^n)$ as
$$
W^{1,G}_A(\R^n):= \{u\in W^{1,1}_\text{loc}(\R^n;\CC)\colon I_G(u), I_G^A(u)<\infty\},
$$
where $I_G^A(u)$ is defined in \eqref{IGA}.

Along this paper we will always assume that $A$ is a bounded and Lipschitz continuous function.

In these spaces we consider the Luxemburg norm defined through the modulars $I_{s, G}^A$, namely
$$
\|u\|_{s, G}^A = \|u\|_G + |u|_{s,G}^A, 
$$
where 
$$
\|u\|_G := \inf\{\lambda>0\colon I_G(\tfrac{u}{\lambda})\le 1\}
$$
is the usual (Luxemburg) norm on $L^G(\R^n;\CC)$ and
$$
|u|_{s,G}^A := \inf\{\lambda>0\colon I_{s,G}^A(\tfrac{u}{\lambda})\le 1\}.
$$

\begin{rem}\label{equiv.I}
Observe that if $z\in\CC$, then
$$
G(|\Re z|) + G(|\Im z|) \le 2 G(|z|) \qquad\text{and}\qquad G(|z|)\le \C (G(|\Re z|) + G(|\Im z|)),
$$
where $\C$ is the constant in the $\Delta_2$ condition. Hence the functionals $I_{s,G}^A$ and $I_G$ turn out to be equivalent to 
$$
\tilde I_{s,G}^A(u) := \iint_{\R^n\times\R^n} G(|D_s^A u(x,y)|)\, \frac{dxdy}{|x-y|^n}\quad \text{and}\quad \tilde I_G(u) := \intr G(|u|)\, dx
$$
respectively.
\end{rem}

\section{Some technical results}
In this section we establish some properties on magnetic Orlicz-Sobolev the spaces and prove some useful properties on magnetic modulars. Finally we state a compactness result in $W^{s,G}_A(\R^n)$.

\begin{prop} \label{es.denso}
$C^\infty_c(\R^n;\CC)$ is dense in $W^{1,G}_A(\R^n)$ provided that the Orlicz function $G$ satisfies the $\Delta_2$ condition.
\end{prop}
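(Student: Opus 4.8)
The plan is to argue in two steps: first a truncation/localization step to reduce to compactly supported functions with good integrability, and then a standard mollification step, using the $\Delta_2$ condition throughout to pass from modular estimates to norm estimates. Let $u \in W^{1,G}_A(\R^n)$; since $G$ satisfies $\Delta_2$, convergence in modular (i.e.\ $I_G(v_k - u) \to 0$ and $I_G^A(v_k - u) \to 0$) is equivalent to convergence in the Luxemburg norm, so it suffices to produce smooth compactly supported $v_k$ with both modulars of the difference tending to zero. Here it is convenient to use the equivalent modulars $\tilde I_G$, $\tilde I_G^A$ from Remark \ref{equiv.I}, and to note that $\nabla(u) - iAu \in L^G$ componentwise (via \eqref{cotas} and the boundedness of $A$), so that $u \in W^{1,G}_A(\R^n)$ really behaves like a standard Orlicz--Sobolev function with weight controlled by $A$.

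First I would handle \textbf{truncation in space}. Fix a cutoff $\eta_R \in C_c^\infty(\R^n)$ with $\eta_R \equiv 1$ on $B_R$, $\supp \eta_R \subset B_{2R}$, $0 \le \eta_R \le 1$ and $|\nabla \eta_R| \le C/R$. Set $u_R := \eta_R u$. Then $u_R \in W^{1,1}_{\text{loc}}$, and $\nabla u_R - iA u_R = \eta_R(\nabla u - iAu) + u \nabla \eta_R$. Using \eqref{P4} (available since $\Delta_2$ holds) one estimates $I_G^A(u_R - u)$ and $I_G(u_R - u)$ by a sum of two terms: one supported in $\R^n \setminus B_R$ involving $G(|\nabla u - iAu|)$ and $G(|u|)$, which vanishes as $R\to\infty$ by dominated convergence (these are integrable by hypothesis); and one involving $G(|u \nabla \eta_R|) \le G(C|u|/R)$ on the annulus $B_{2R}\setminus B_R$, which by property \eqref{P5} is bounded by $(C/R) G(|u|)$ for $R$ large and hence also vanishes. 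So we may assume $u$ has compact support, and then $u \in L^1(\R^n;\CC)$ with $\nabla u - iAu \in L^1$, hence $\nabla u \in L^1$.

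Next comes \textbf{mollification}: let $u^\ve := u * \rho_\ve$ with a standard mollifier. Then $u^\ve \in C_c^\infty$, $u^\ve \to u$ and $\nabla u^\ve \to \nabla u$ in $L^1_{\text{loc}}$, and by the standard Orlicz-space argument (Jensen's inequality gives $\tilde I_G(u^\ve) \le \tilde I_G(u)$, so $u^\ve$ is bounded in $L^G$; then modular convergence follows from $L^1$-convergence plus the $\Delta_2$-equi-integrability, or one passes through $C_c$ functions which are dense in $L^G$ under $\Delta_2$) one gets $I_G(u^\ve - u) \to 0$. For the magnetic term one writes $\nabla u^\ve - iA u^\ve = (\nabla u - iAu)*\rho_\ve + \big( (Au)*\rho_\ve - A u^\ve \big)$; the first piece converges to $\nabla u - iAu$ in the modular sense as before, and the commutator piece $(Au)*\rho_\ve - A(u*\rho_\ve)$ tends to $0$ in $L^1$, indeed in $L^G$, because $A$ is Lipschitz (so $|(Au)*\rho_\ve(x) - A(x)u^\ve(x)| \le \int |A(x-y)-A(x)||u(x-y)|\rho_\ve(y)\,dy \le \ve [A]_{\mathrm{Lip}} (|u|*\rho_\ve)(x)$, which is $O(\ve)$ times an $L^G$ function). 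Combining, $I_G^A(u^\ve - u) \to 0$, and by $\Delta_2$ the norm $\|u^\ve - u\|_{1,G}^A \to 0$.

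The main obstacle is the \textbf{magnetic commutator estimate}: one must check that replacing $A u$ by its mollification does not spoil the Orlicz estimates, which is exactly where Lipschitz continuity (rather than mere boundedness) of $A$ is used, together with \eqref{cotas} to convert the pointwise bound $O(\ve)\cdot(|u|*\rho_\ve)$ into a modular bound $O(\ve^{p_-})$ or $O(\ve^{p_+})$ times $\tilde I_G(|u|*\rho_\ve) \le \tilde I_G(|u|)$. Everything else is the classical truncation-plus-mollification scheme for Orlicz--Sobolev spaces, with $\Delta_2$ doing the routine work of making modular convergence equivalent to norm convergence.
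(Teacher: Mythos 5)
Your proof is correct and follows essentially the same route as the paper, which simply invokes the standard truncation-plus-mollification argument of \cite[Theorem 7.22]{LossLieb} adapted to the Orlicz setting via the $\Delta_2$ condition. Your explicit Lipschitz commutator estimate for $(Au)*\rho_\ve - A\,u^\ve$ is precisely one of the ``obvious modifications'' the paper alludes to.
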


\begin{proof}
The proof is completely analogous to that of \cite[Theorem 7.22]{LossLieb} with the obvious modifications and using the $\Delta_2$ condition.
\end{proof}

\begin{prop}\label{propiedades1}
Let $G$ be an Orlicz function satisfying the $\Delta_2$ condition. Then the spaces $L^G(\R^n;\CC)$ and $W_A^{1,G}(\R^n)$ are separable Banach spaces. 

If we further assume \eqref{lieberman},  then the dual space of $L^G(\R^n;\CC)$ can be identified with $L^{G^*}(\R^n;\CC)$. Moreover, $L^G(\R^n;\CC)$ and $W^{1, G}_A(\R^n)$ are reflexive spaces.
\end{prop}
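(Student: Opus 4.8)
The plan is to reduce the statement to the classical theory of scalar, real‑valued Orlicz and Orlicz--Sobolev spaces, which is available under the $\Delta_2$ condition. Identifying $\CC$ with $\R^2$ through $u\mapsto(\Re u,\Im u)$, the modular $I_G(u)=\int_{\R^n}G(|\Re u|)+G(|\Im u|)\,dx$ is exactly the sum of the two scalar modulars of $\Re u$ and $\Im u$, so the Luxemburg norm $\|\cdot\|_G$ on $L^G(\R^n;\CC)$ is equivalent to the product norm on $L^G(\R^n;\R)\times L^G(\R^n;\R)$, and $L^G(\R^n;\CC)$ is isomorphic, as a real Banach space, to $L^G(\R^n;\R)^2$. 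Under \eqref{H2} the classical Orlicz theory (see \cite{KrRu61}) gives that $L^G(\R^n;\R)$ is a separable Banach space; since a finite product of separable Banach spaces is separable, so is $L^G(\R^n;\CC)$. If moreover \eqref{lieberman} holds then $G^*$ also satisfies \eqref{H2}, whence $L^G(\R^n;\R)$ is reflexive with dual $L^{G^*}(\R^n;\R)$ under the pairing $(u,v)\mapsto\int_{\R^n}uv\,dx$; as the dual of a finite product is the product of the duals and a finite product of reflexive spaces is reflexive, $L^G(\R^n;\CC)$ is reflexive and its dual is identified with $L^{G^*}(\R^n;\R)^2\cong L^{G^*}(\R^n;\CC)$, now with pairing $(u,v)\mapsto\int_{\R^n}\Re(u\bar v)\,dx$.

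For $W^{1,G}_A(\R^n)$ the first step is to observe that the magnetic term is harmless because $A$ is bounded. Writing $u=u_1+iu_2$, one has the pointwise bounds $|\nabla u-iAu|\le|\nabla u|+\|A\|_\infty|u|$ and $|\nabla u|\le|\nabla u-iAu|+\|A\|_\infty|u|$, so, using \eqref{P4} together with the $\Delta_2$ condition (which controls $G(\|A\|_\infty|u|)$ by a multiple of $G(|u|)$), for $u\in L^G(\R^n;\CC)$ one gets $I_G^A(u)<\infty$ if and only if $\nabla u_1,\nabla u_2\in L^G(\R^n;\R)$, with the corresponding seminorms equivalent. Hence $W^{1,G}_A(\R^n)$ coincides, as a set and up to equivalence of norms, with the non‑magnetic Orlicz--Sobolev space $W^{1,G}(\R^n;\CC):=\{u\in W^{1,1}_{\mathrm{loc}}(\R^n;\CC)\colon u,\nabla u\in L^G(\R^n;\CC)\}$, so it is enough to prove the assertions for this latter space.

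Next I would view $W^{1,G}(\R^n;\CC)$ as a subspace of $L^G(\R^n;\CC)^{n+1}$ through the isometry $u\mapsto(u,\partial_1u,\dots,\partial_nu)$, with respect to the natural norm $\|u\|_G+\sum_{j}\|\partial_ju\|_G$ (equivalent to the one used on $W^{1,G}_A$). The only nontrivial point is that this subspace is closed: if $u_k\to u$ and $\nabla u_k\to v$ in $L^G$, then, using the continuous embedding $L^G(\R^n;\CC)\hookrightarrow L^1_{\mathrm{loc}}(\R^n;\CC)$ — which follows from the convexity of $G$, since $t\mapsto G(t)/t$ is nondecreasing and therefore $\int_E|w|\,dx\le M|E|+\tfrac{M}{G(M)}\int_E G(|w|)\,dx$ for every bounded measurable $E$ and every $M>0$ — the convergences also hold in $L^1_{\mathrm{loc}}$, so that $v=\nabla u$ in the sense of distributions and $u\in W^{1,G}(\R^n;\CC)$. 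Since a closed subspace of a separable (resp. reflexive) Banach space is separable (resp. reflexive), the properties already established for $L^G(\R^n;\CC)$ pass to $W^{1,G}(\R^n;\CC)$ and hence to $W^{1,G}_A(\R^n)$.

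I expect the only delicate points to be bookkeeping rather than conceptual: one must be consistent about treating all of these spaces as real Banach spaces when identifying the dual (so that the pairing is $\Re\int u\bar v\,dx$), and one must check that the various candidate norms on $W^{1,G}_A(\R^n)$ — the one inherited from the product $L^G(\R^n;\CC)^{n+1}$, the one built from the modulars $I_G$ and $I_G^A$, and the non‑magnetic Orlicz--Sobolev norm — are mutually equivalent, which is precisely where the boundedness of $A$ and the $\Delta_2$ condition are used. Apart from that, the argument is a direct application of the classical Orlicz‑space facts recalled in Section \ref{preliminar} together with the elementary stability of separability and reflexivity under finite products and passage to closed subspaces.
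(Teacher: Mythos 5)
Your argument is correct and is precisely the standard proof that the paper omits (the authors simply write that the proof is standard): reduction to the scalar real-valued Orlicz space via real and imaginary parts, removal of the magnetic term using the boundedness of $A$ together with \eqref{H2}, and realization of $W^{1,G}_A(\R^n)$ as a closed subspace of $L^G(\R^n;\CC)^{n+1}$, from which separability and reflexivity are inherited. All the classical inputs you invoke (separability under \eqref{H2}, reflexivity and the duality $[L^G]^*\cong L^{G^*}$ under \eqref{lieberman}, and the embedding $L^G\hookrightarrow L^1_{\mathrm{loc}}$ needed for closedness) are exactly the ones the authors rely on from \cite{KrRu61}.
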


\begin{proof}
The proof is standard and it is omitted.
\end{proof}

\subsection{Modular of convolutions}\label{convolutions}
In this paragraph we analyze the behavior of the modular of convolutions. As usual, we denote by $\rho\in C^\infty_c(\R^n)$ the standard mollifier with $\supp(\rho)=B_1(0)$ and $\rho_\ve(x)=\ve^{-n}\rho(\tfrac{x}{\ve})$ is the approximation of the identity. It follows that $\{\rho_\ve\}_{\ve>0}$ is a family of positive functions satisfying 
$$
\rho_\ve\in C_c^\infty(\R^n), \quad \supp(\rho_\ve)=B_\ve(0), \quad \intr \rho_\ve \, dx =1.
$$
Given $u\in L^G(\R^n;\CC)$ we define the regularized functions $u_\ve\in L^G(\R^n;\CC)\cap C^\infty(\R^n;\CC)$ as
\begin{equation} \label{regularizada}
u_\ve(x)=u*\rho_\ve(x).
\end{equation}

In this context we prove the following useful estimate on regularized functions.
\begin{lema} \label{lema.reg}
Given an Orlicz function $G$ satisfying the $\Delta_2$ condition, let $u\in L^G(\R^n;\CC)$ and $\{u_\ve\}_{\ve>0}$ be the family defined in \eqref{regularizada}. Then there exists a constant $C$ depending on $n$, $\|A\|_\infty$ and $\C$, the constant in \eqref{H2}, such that
$$
I_{s,G}^A(u_\ve)  \leq  C \left( I_{s,G}^A(u) +  \left(\frac{1 }{s}+\frac{1}{1-s}\right)I_G(u)\right),
$$
for all $\ve>0$ and $0<s<1$.
\end{lema}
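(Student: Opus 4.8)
The plan is to bound $I_{s,G}^A(u_\ve)$ by separating the magnetic H\"older quotient of $u_\ve$ into a "difference quotient" part and a "phase oscillation" part. Write, for $x,y\in\R^n$,
$$
D_s^A u_\ve(x,y) = \frac{u_\ve(x)-u_\ve(y)}{|x-y|^s} + \frac{\big(1-e^{i(x-y)A(\frac{x+y}{2})}\big)u_\ve(y)}{|x-y|^s}.
$$
Since $G$ satisfies $\Delta_2$, the inequality \eqref{P4} (applied to real and imaginary parts, using Remark \ref{equiv.I}) reduces the estimate to controlling $\iint G\big(\tfrac{|u_\ve(x)-u_\ve(y)|}{|x-y|^s}\big)\tfrac{dxdy}{|x-y|^n}$ and $\iint G\big(\tfrac{|1-e^{i(x-y)A(\frac{x+y}{2})}||u_\ve(y)|}{|x-y|^s}\big)\tfrac{dxdy}{|x-y|^n}$ separately, up to a constant depending on $\C$.

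For the first term, I would use the standard convolution trick: since $u_\ve(x)-u_\ve(y) = \int_{\R^n}(u(x-z)-u(y-z))\rho_\ve(z)\,dz$, Jensen's inequality applied to the convex function $G$ together with the probability measure $\rho_\ve(z)\,dz$ gives
$$
G\left(\frac{|u_\ve(x)-u_\ve(y)|}{|x-y|^s}\right) \le \int_{\R^n} G\left(\frac{|u(x-z)-u(y-z)|}{|x-y|^s}\right)\rho_\ve(z)\,dz,
$$
and after integrating in $x,y$ against $|x-y|^{-n}$, a change of variables and Fubini show this is at most $\tilde I_{s,G}(u) \le 2\,I_{s,G}^A(u)$ plus, if one prefers the magnetic quantity directly, one can instead compare $|u(x-z)-u(y-z)|$ with $|D_s^A u(x-z,y-z)|$ at the cost of another phase term handled as below. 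Either route contributes $C\,I_{s,G}^A(u)$ (plus, in the second route, a term of the type treated next).

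For the phase oscillation term, the key estimate is $|1-e^{i(x-y)A(\frac{x+y}{2})}| \le |x-y|\,\|A\|_\infty$, so the argument of $G$ is bounded by $\|A\|_\infty |x-y|^{1-s}|u_\ve(y)|$. Splitting the $x$-integral into $|x-y|\le 1$ and $|x-y|>1$ and using \eqref{cotas} (or just \eqref{P5} and monotonicity) to pull the factor $(\|A\|_\infty|x-y|^{1-s})$ out of $G$ — more precisely, using $G(\|A\|_\infty r^{1-s} t)\le \max\{\|A\|_\infty^{p_+},\|A\|_\infty^{p_-}\}\max\{r^{p_-(1-s)}, r^{p_+(1-s)}\}\, G(t)$ when $r\le1$ and the bound $G\le$ comparable to $G$ of the larger argument when $r>1$ — one is left with $G(|u_\ve(y)|)$ times an integral of the form $\int_{|w|\le1}|w|^{(1-s)p_\pm - n}\,dw$, which equals $\frac{c_n}{(1-s)p_\pm}$, and a tail integral over $|w|>1$ of $|w|^{-n-s}$, which equals $\frac{c_n}{s}$. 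This produces exactly the factor $\frac1s + \frac1{1-s}$ multiplying $\int_{\R^n}G(|u_\ve(y)|)\,dy \le C\,I_G(u)$, the last inequality because convolution with $\rho_\ve$ does not increase the $G$-modular (Jensen again). Here one does need the full strength of \eqref{lieberman}/\eqref{cotas} rather than bare $\Delta_2$, so strictly speaking the hypothesis should be read as providing the $p_\pm$ exponents; alternatively, with only $\Delta_2$ one gets the single exponent $p_+$ from \cite[Theorem 4.1]{KrRu61}, and $\int_{|w|\le1}|w|^{(1-s)p_+-n}\,dw = \frac{c_n}{(1-s)p_+}$ still suffices.

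The main obstacle is bookkeeping: one must make sure the power of $|x-y|$ extracted from $G$ in the phase term is integrable near the origin uniformly as $s\uparrow1$, which is where the $\frac1{1-s}$ is unavoidable, and integrable at infinity, which is where the $\frac1s$ comes from; and one must be careful that the constant absorbing $\C$ from the repeated use of \eqref{P4} does not depend on $s$. Neither is serious — both are finite, explicit integrals — but the cleanest presentation is to first reduce to $\tilde I_{s,G}^A$ via Remark \ref{equiv.I}, then handle the two pieces as above.
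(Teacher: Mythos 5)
Your argument is sound and reaches the stated bound, but it takes a somewhat different route from the paper's. The paper convolves \emph{inside} the magnetic quotient: it writes $u_\ve(x)-e^{-ihA(x+\frac h2)}u_\ve(x+h)$ as the convolution of $u(x-y)-e^{-ihA(x+\frac h2)}u(x-y+h)$ against $\rho_\ve$, and then corrects the phase from $A(x+\frac h2)$ to $A(x-y+\frac h2)$ so as to recognize $|D_s^A u(x-y,x-y+h)|$; the error is a difference of two phase factors, bounded after division by $|h|^s$ by $C\|A\|_\infty$ times $|h|^{1-s}$ for $|h|<1$ and $|h|^{-s}$ for $|h|\ge1$, and a single application of Jensen together with \eqref{H2} and \eqref{P5} then yields $\tilde I^A_{s,G}(u) + C\left(\frac1s+\frac1{1-s}\right)\tilde I_G(u)$ directly. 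You instead peel the phase off first and reduce to the non-magnetic quotient of $u_\ve$. Both work and use the same ingredients (Jensen with the probability measure $\rho_\ve\,dz$, \eqref{P4}, \eqref{P5}, the split at $|x-y|=1$); the paper's version saves one comparison between the magnetic and non-magnetic modulars.

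One intermediate claim in your first route is false as stated: $\tilde I_{s,G}(u)\le 2\, I^A_{s,G}(u)$ does not hold in general. (Take $A$ constant and $u(x)=\eta(x)e^{ix\cdot A}$ with $\eta$ a cutoff: then $D^A_s u\equiv 0$ wherever $\eta\equiv1$ while $D_s u$ is not, so the non-magnetic quotient is not dominated by the magnetic one; the diamagnetic inequality only controls $D_s|u|$.) The correct comparison is the two-sided estimate of Lemma \ref{lema.A.0}, which carries the additional term $\left(\frac1s+\frac1{1-s}\right)I_G(u)$. This is harmless here, since that term already appears on the right-hand side of the lemma, and your second route (comparing $|u(x-z)-u(y-z)|$ with $|D^A_s u(x-z,y-z)|$ and treating the resulting phase term as in your last paragraph) avoids the issue altogether; but as written the inequality should be replaced by the one from Lemma \ref{lema.A.0}. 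Finally, your concern about needing \eqref{lieberman} is unnecessary: property \eqref{P5}, i.e.\ $G(ab)\le bG(a)$ for $0<b<1$, already extracts the factors $|x-y|^{1-s}$ and $|x-y|^{-s}$ with exponent $1$, and $\int_{|w|<1}|w|^{1-s-n}\,dw=\frac{n\omega_n}{1-s}$, $\int_{|w|\ge1}|w|^{-s-n}\,dw=\frac{n\omega_n}{s}$ are all that is required; only the $\Delta_2$ constant enters.
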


\begin{proof}
By Remark \ref{equiv.I}, it is enough to prove the result for the functionals $\tilde I_{s,G}^A$ and $\tilde I_G$.

First, observe that the modular $\tilde I_{s,G}^A(u_\ve)$ can be expressed as
\begin{equation}\label{ec.g.eps}
\tilde I_{s,G}^A(u_\ve) = \iint_{\R^n\times\R^n} G\left(|D_s^A u_\ve(x,x+h)|\right) \, \frac{dxdh }{|h|^n}.
\end{equation}
Now, observe that
\begin{align*}
|D_s^A u_\ve(x,x+h)| \le & \intr \left|\frac{u(x-y) - e^{-ihA(x+\frac{h}{2})}u(x-y+h)}{|h|^2}\right| \rho_\ve(y)\, dy\\
\le & \intr |D_s^A u(x-y,x-y+h)| \rho_\ve(y)\, dy \\
& + \intr \frac{|e^{-ihA(x+\frac{h}{2})} - e^{-ihA(x-y+\frac{h}{2})}| |u(x-y+h)|}{|h|^s} \rho_\ve(y)\, dy.
\end{align*}
Next, we use that $|1-e^{it}|\le |t|$ for $|h|<1$ and $|1-e^{it}|\le 2$ for $|h|\ge 1$, and we obtain the bound
$$
|D_s^A u_\ve(x,x+h)|\le \intr |D_s^A u(x-y,x-y+h)| \rho_\ve(y)\, dy + C \min\{|h|^s, |h|^{1-s}\} \intr |u(x-y+h)| \rho_\ve(y)\, dy,
$$
where $C$ depends on $\|A\|_\infty$.

Now, using \eqref{H2} and Jensen's inequality, we get
\begin{align*}
 G\big( |D_s^A u_\ve(x,x+h)| \big)\le & \C \intr G(|D_s^A u(x-y,x-y+h)|) \rho_\ve(y)\, dy\\
 & + \C \intr G(C \min\{|h|^s, |h|^{1-s}\} |u(x-y+h)|)\rho_\ve(y)\, dy\\
 = & \C ((i) + (ii)).
\end{align*}

Integrating $(i)$ over $\R^n\times\R^n$, using Fubini's theorem and the fact that $\intr \rho_\ve\, dy = 1$, we find that
\begin{align} \label{ec.g.eps.1}
\iint_{\R^n\times\R^n}\intr G(|D_s^A u(x-y,x-y+h)|) \rho_\ve(y)\, dy\, \frac{dxdh}{|h|^n} = \tilde I_{s,G}^A(u).
\end{align}
Now, we deal with the integral of $(ii)$. First we observe that from \eqref{H2} it follows that  $G(Ct)\le \C^\kappa G(t)$ where $\kappa\in\N$ is such that $2^{\kappa-1}\le C < 2^\kappa$. Hence, integrate $(ii)$ over $\R^n\times \R^n$ and obtain
\begin{align*}
\iint_{\R^n\times\R^n} \intr &G(C \min\{|h|^s, |h|^{1-s}\} |u(x-y+h)|)\rho_\ve(y)\, dy\, dx\, \frac{dh}{|h|^n} \\
\le & \C^\kappa \iint_{\R^n\times\R^n} \intr G(\min\{|h|^s, |h|^{1-s}\} |u(x-y+h)|)\rho_\ve(y)\, dy\, dx\, \frac{dh}{|h|^n}\\
\le &\C^\kappa  \int_{|h|<1} \intr \intr G(|u(x-y+h)|)\rho_\ve(y)\, dy\, dx\, \frac{dh}{|h|^{n-1+s}} \\
& + \C^\kappa  \int_{|h|\ge 1} \intr \intr G(|u(x-y+h)|)\rho_\ve(y)\, dy\, dx\, \frac{dh}{|h|^{n-s}}.
\end{align*}

Next, we use Fubini's theorem and the fact that $\intr\rho_\ve\, dy=1$ to find that
\begin{align*}
\iint_{\R^n\times\R^n} \intr &G(C \min\{|h|^s, |h|^{1-s}\} |u(x-y+h)|)\rho_\ve(y)\, dy\, dx\, \frac{dh}{|h|^n}\le  C\left(\frac{1}{s} + \frac{1}{1-s}\right) \tilde I_G(u).
\end{align*}
The proof is now complete.
\end{proof}

\subsection{Modular of truncations}
Let us estimate the behavior of modulars of truncated functions.  Let $\eta\in C_c^\infty(\R^n)$ such that $\eta=1$ in $B_1(0)$, $\supp (\eta)=B_2(0)$, $0\leq \eta\leq 1$ in $\R^n$ and $\|\nabla \eta\|_\infty\le 2$. Given $k\in\N$ we define $\eta_k(x)=\eta(\tfrac{x}{k})$. Observe that  $\{\eta_k\}_{k\in\N} \in C_c^\infty(\R^n)$ and
$$
0\leq \eta_k \leq 1, \quad \eta_k =1 \text{ in } B_k(0), \quad  \supp (\eta_k)=B_{2k} (0),\quad  |\nabla \eta_k|\leq \frac{2}{k}.
$$
Given $u\in L^G(\R^n;\CC)$ we define the truncated functions $u_k$, $k\in\N$ as 
\begin{equation} \label{truncada}
u_k=\eta_k u.
\end{equation}

In the next lemma we analyze the behavior of the modular of truncated functions. 

\begin{lema} \label{lema.trunc}
Given an Orlicz function $G$ satisfying \eqref{H2}, let $u\in L^G(\R^n;\CC)$ and $\{u_k\}_{k\in\N}$ be the functions defined in \eqref{truncada}. Then there exists a constant $C$ depending on $n$, $\|A\|_\infty$ and $\C$, the constant in the $\Delta_2$ condition, such that
$$
I_{s,G}^A(u_k) \leq C\left( I_{s,G}^A(u) +  \left(\frac{1}{s} + \frac{1}{k(1-s)}\right)I_G (u)\right).
$$
\end{lema}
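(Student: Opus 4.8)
The plan is to reduce, via Remark~\ref{equiv.I}, to proving the estimate for the equivalent functionals $\tilde I_{s,G}^A$ and $\tilde I_G$, and then to control the magnetic Hölder quotient of $u_k=\eta_k u$ pointwise. Writing $h=x-y$ and adding and subtracting $\eta_k(x)e^{i(x-y)A(\frac{x+y}{2})}u(y)$ in the numerator of $D_s^A u_k(x,y)$ gives the identity
\[
D_s^A u_k(x,y) = \eta_k(x)\,D_s^A u(x,y) + e^{i(x-y)A(\frac{x+y}{2})}\,u(y)\,\frac{\eta_k(x)-\eta_k(y)}{|x-y|^s}.
\]
Since $0\le\eta_k\le 1$ and the magnetic phase has modulus $1$, taking absolute values yields
\[
|D_s^A u_k(x,y)| \le |D_s^A u(x,y)| + |u(y)|\,\frac{|\eta_k(x)-\eta_k(y)|}{|x-y|^s}.
\]
Observe that, unlike in Lemma~\ref{lema.reg}, the magnetic potential causes no difficulty here because $\eta_k$ is real-valued, so the entire magnetic content is carried by the first summand, which is the unmodified quotient of $u$.

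Next I would apply \eqref{P4} followed by \eqref{H2} to bound $G(|D_s^A u_k(x,y)|)$ by a constant times $G(|D_s^A u(x,y)|)$ — whose integral against $\tfrac{dxdy}{|x-y|^n}$ is $\tilde I_{s,G}^A(u)$ — plus a constant times the error term $G\big(|u(y)|\,\tfrac{|\eta_k(x)-\eta_k(y)|}{|h|^s}\big)$. For the error term I would use the two elementary bounds $|\eta_k(x)-\eta_k(y)|\le 2$ and $|\eta_k(x)-\eta_k(y)|\le\tfrac{2}{k}|h|$, splitting the domain into $\{|h|\ge 1\}$ and $\{|h|<1\}$. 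On $\{|h|\ge 1\}$ one has $\tfrac{|\eta_k(x)-\eta_k(y)|}{|h|^s}\le\tfrac{2}{|h|^s}\le 2$, so one more application of \eqref{H2} and then of \eqref{P5} (legitimate since $|h|^{-s}\le 1$) bounds the integrand by $\tfrac{C}{|h|^s}G(|u(y)|)$; on $\{|h|<1\}$ one has $\tfrac{|\eta_k(x)-\eta_k(y)|}{|h|^s}\le\tfrac{2}{k}|h|^{1-s}$, and since $\tfrac1k|h|^{1-s}\le 1$ for every $k\ge 1$, a further use of \eqref{H2} and \eqref{P5} bounds the integrand by $\tfrac{C}{k}|h|^{1-s}G(|u(y)|)$.

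It then remains to integrate, apply Fubini, and use the elementary identities
\[
\int_{|h|\ge 1}\frac{dh}{|h|^{n+s}}=\frac{c_n}{s},\qquad \int_{|h|<1}\frac{dh}{|h|^{n-1+s}}=\frac{c_n}{1-s},
\]
which bound the error integral by $C\big(\tfrac1s+\tfrac{1}{k(1-s)}\big)\tilde I_G(u)$. Combining with the first summand gives the claimed estimate for $\tilde I_{s,G}^A(u_k)$, and Remark~\ref{equiv.I} transfers it to $I_{s,G}^A$ and $I_G$. The only mildly delicate point — and the closest thing to an obstacle — is to pick, in each range of $|h|$, the bound on $\tfrac{|\eta_k(x)-\eta_k(y)|}{|h|^s}$ that produces precisely the stated $\tfrac1s+\tfrac1{k(1-s)}$ dependence, and to make sure the use of \eqref{P5} (which requires argument $\le 1$) is valid uniformly in $k\ge 1$; the extra invocations of \eqref{H2} absorb the remaining constants.
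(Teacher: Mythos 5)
Your proposal is correct and follows essentially the same route as the paper's proof: the paper adds and subtracts $\eta_k(y)u(x)$ (yielding $D_s^A u_k = \eta_k(y)D_s^Au + u(x)D_s\eta_k$) while you add and subtract $\eta_k(x)e^{i(x-y)A(\frac{x+y}{2})}u(y)$, but these are symmetric variants and after Fubini the error terms coincide. The subsequent steps — \eqref{P4}/\eqref{H2}, the split at $|x-y|=1$, the bounds $|\eta_k(x)-\eta_k(y)|\le 2$ and $\le\tfrac{2}{k}|x-y|$, and the use of \eqref{P5} after absorbing the factor $2$ via \eqref{H2} — match the paper exactly.
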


\begin{proof}
As in the previous proof, by Remark \ref{equiv.I} is enough to prove the Lemma for the functionals $\tilde I_{s,G}^A$ and $\tilde I_G$.

Observe first that $D_s^A u_k(x,y)=\eta_k(y) D_s^A u(x,y) + u(x) D_s\eta_k(x,y)$, where 
$$
D_s\eta_k(x,y) = \frac{\eta_k(x)-\eta_k(y)}{|x-y|^s}.
$$
Then, from \eqref{H2} and since $\eta_k\leq 1$ we have
$$
G(|D_s^A u_k(x,y)|) \leq \frac{\C}{2}  G(|D_s^A u(x,y)|) + \frac{\C}{2}  G(|u(x)| |D_s \eta_k(x,y)|).
$$
Then we get
\begin{align*}
\tilde I_{s,G}^A(u_k)  \leq  \frac{\C}{2} \tilde I_{s,G}^A(u) + \frac{\C}{2}  \iint_{\R^n\times\R^n}  G(|u(x)| |D_s \eta_k(x,y)|)\,  \frac{dxdy}{|x-y|^n}.
\end{align*}

The integral above can be splitted as follows
\begin{align*}
\left(\intr \int_{|x-y|\geq 1} + \intr \int_{|x-y|< 1}\right)  G(|u(x)| |D_s \eta_k(x,y)|)\, \frac{dx dy}{|x-y|^n} := I_1 + I_2.
\end{align*}

The monotonicity of $G$ and \eqref{cotas} allow us to bound $I_1$ as follows
\begin{align*}
I_1 &\leq \intr\int_{|x-y|\geq 1}  \frac{G (2 |u(x)|)}{|x-y|^{n+s}}\,dxdy\le \C n\omega_n \int_1^\infty \frac{1}{r^{s+1}}dr \intr G(|u(x)|) \,dx= \frac{\C n\omega_n}{s} \tilde I_G(u) .
\end{align*}

We deal now with $I_2$. Observe that, since $|\nabla \eta_k|\leq \tfrac{2}{k}$ and \eqref{cotas} holds, 
\begin{align*}
I_2 &\leq \intr \int_{|x-y|\leq 1} G\left(\frac{2}{k} \frac{ |u(x)| }{  |x-y|^{s-1}}\right) \frac{dx dy}{|x-y|^n} \leq \frac{n\omega_n \C}{k}  \intr \int_0^1 G ( |u(x)|  )  \, \frac{dr}{r^s}dx = \frac{n\omega_n \C}{k(1-s)} \tilde I_G(u),
\end{align*}
where we have used \eqref{H2} in the last inequality.

From these estimates the conclusion of the lemma follows.
\end{proof}

\subsection{A compactness result for $W^{s,G}_A(\R^n)$ spaces.}
In this subsection we prove the compactness of the immersion $W^{s,G}_A$ into $L^G$. The proof lies   on a variant of the well-known Fr\`echet-Kolmogorov Compactness Theorem.

\begin{thm} \label{teo.comp}
Let $0<s<1$ and $G$ an Orlicz function satisfying \eqref{H2}. Then for every bounded sequence $\{u_n\}_{n\in\N}\subset W^{s,G}_A(\R^n)$, i.e., $\sup_{n\in\N} ( I_{s,G}^A(u_n) + I_G^A(u_n) )<\infty$, there exists $u\in W^{s,G}_A(\R^n)$ and a subsequence $\{u_{n_k}\}_{k\in\N}\subset \{u_n\}_{n\in\N}$ such that $u_{n_k}\to u$ in $L^G_{\text{loc}}(\R^n;\CC)$.
\end{thm}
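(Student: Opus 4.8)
The plan is to deduce the result from the classical Fréchet–Kolmogorov compactness theorem in $L^G_{\text{loc}}$, which requires us to control (i) the $L^G$ norm of translates $u_n(\cdot+h)-u_n(\cdot)$ uniformly as $h\to 0$, and (ii) tightness / local boundedness. The essential observation is that the magnetic phase factor $e^{i(x-y)A(\frac{x+y}{2})}$ is, for $|x-y|$ small, a multiplicative perturbation of the identity of size $O(|x-y|)$, so modulo a harmless error the magnetic Hölder quotient $D_s^A u_n$ controls the ordinary difference quotient. Concretely, I would first record the pointwise estimate
\begin{equation*}
|u_n(x)-u_n(y)| \le |x-y|^s\,|D_s^A u_n(x,y)| + |x-y|\,\|A\|_\infty\,|u_n(y)|,
\end{equation*}
obtained from $|D_s^A u_n(x,y)| = |u_n(x)-e^{i(x-y)A(\frac{x+y}{2})}u_n(y)|/|x-y|^s$ together with $|1-e^{it}|\le|t|$. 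Using the $\Delta_2$ property \eqref{P4} this gives, for a constant $C=C(\C,\|A\|_\infty)$,
\begin{equation*}
G(|u_n(x)-u_n(y)|) \le C\,G(|x-y|^s|D_s^A u_n(x,y)|) + C\,G(|x-y|\,|u_n(y)|).
\end{equation*}

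Next I would integrate this inequality in a way adapted to translations. Fix a bounded open set $\Omega$ and $h$ small; setting $y=x$ and the second point $=x+h$, the goal is to bound $\int_\Omega G(|u_n(x+h)-u_n(x)|)\,dx$. Averaging the displayed inequality over a small ball of radii comparable to $|h|$ (the standard trick that turns a pointwise difference-quotient bound into an integral modulus-of-continuity estimate), I expect to obtain something of the form
\begin{equation*}
\int_\Omega G(|u_n(x+h)-u_n(x)|)\,dx \le C\,|h|^{?}\Big( I_{s,G}^A(u_n) + I_G(u_n)\Big) \longrightarrow 0 \quad\text{as } |h|\to 0,
\end{equation*}
uniformly in $n$ by the assumed bound $\sup_n (I_{s,G}^A(u_n)+I_G^A(u_n))<\infty$ (recall $I_G^A$ and $I_G$ are comparable via Remark \ref{equiv.I} and \eqref{P4}). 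Here the scaling in $|h|$ is the delicate bookkeeping: one needs the $(1-s)$-type constants not to blow up, but since $s$ is fixed in this theorem (unlike the BBM statements) this is only a matter of a finite constant depending on $s$. Together with the uniform $L^G(\Omega)$-bound coming from $\sup_n I_G^A(u_n)<\infty$, the Orlicz–space Fréchet–Kolmogorov theorem (valid under \eqref{H2}, since then $L^G$ has absolutely continuous norm) yields a subsequence $u_{n_k}\to u$ in $L^G(\Omega;\CC)$; a diagonal argument over an exhaustion of $\R^n$ by balls upgrades this to convergence in $L^G_{\text{loc}}(\R^n;\CC)$.

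Finally I must check $u\in W^{s,G}_A(\R^n)$. For this I would pass to a further subsequence converging a.e., so that $D_s^A u_{n_k}(x,y)\to D_s^A u(x,y)$ a.e.\ on $\R^n\times\R^n$ (the phase factor is continuous), and apply Fatou's lemma to the (equivalent, by Remark \ref{equiv.I}) modular $\tilde I_{s,G}^A$ to conclude $I_{s,G}^A(u)\le \liminf_k I_{s,G}^A(u_{n_k})<\infty$; the same Fatou argument on $\int G(|u|)$ gives $u\in L^G$, hence $u\in W^{s,G}_A(\R^n)$. The main obstacle I anticipate is purely technical: carefully choosing the averaging domain and extracting the correct power of $|h|$ so that the translation estimate is genuinely uniform in $n$ and vanishes as $h\to 0$ — essentially an Orlicz-space adaptation of the proof that $W^{s,p}\hookrightarrow\hookrightarrow L^p_{\text{loc}}$, with the magnetic term handled by the elementary perturbation bound above. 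The Orlicz Fréchet–Kolmogorov theorem itself I would quote from the literature rather than reprove.
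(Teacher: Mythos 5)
Your proposal is correct in substance but follows a genuinely different route from the paper. The paper's proof is a two-line reduction: it invokes Lemma \ref{lema.A.0}, which shows $W^{s,G}_A(\R^n)=W^{s,G}(\R^n)$ with the modulars $I_{s,G}^A$ and $I_{s,G}$ comparable up to a multiple of $I_G$, and then quotes the non-magnetic compactness theorem \cite[Theorem 3.1]{FBS}. You instead re-run the Fr\'echet--Kolmogorov argument from scratch on the magnetic quotient, absorbing the phase factor via $|1-e^{i(x-y)A(\frac{x+y}{2})}|\le \|A\|_\infty|x-y|$ --- which is exactly the perturbation bound the paper uses inside Lemma \ref{lema.A.0}. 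So your argument is essentially an inlined version of ``Lemma \ref{lema.A.0} plus the cited theorem''; it buys self-containedness at the cost of carrying out the averaging bookkeeping that the citation hides. That bookkeeping does work as you anticipate: averaging over $z\in B_{|h|}(x)$, using \eqref{P5} to pull out $|x-z|^s\le |h|^s$ and comparing $|B_{|h|}(x)|^{-1}$ with the kernel $|x-z|^{-n}$, one obtains $\int_\Omega G(|u_n(x+h)-u_n(x)|)\,dx\le C\bigl(|h|^s+|h|\bigr)\bigl(I_{s,G}^A(u_n)+I_G(u_n)\bigr)$ uniformly in $n$; note that your un-averaged version (``setting $y=x$ and the second point $=x+h$'') is not by itself usable, since a single $h$-slice of the double integral is not controlled by $I_{s,G}^A$, so the averaging step is genuinely needed, as you recognize. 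Your closing Fatou argument for $u\in W^{s,G}_A(\R^n)$ is fine. One small correction: the comparability you invoke between $I_G^A$ and $I_G$ is not what Remark \ref{equiv.I} asserts --- $I_G^A$ as defined in \eqref{IGA} is the gradient modular; the boundedness hypothesis should be read as $\sup_n\bigl(I_{s,G}^A(u_n)+I_G(u_n)\bigr)<\infty$, and it is this uniform $L^G$ bound that your argument actually uses.
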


This theorem is an immediate consequence of the analogous compactness result for the inclusion $W^{s,G}(\R^n)\subset L^G_{\text{loc}}(\R^n;\CC)$ proven in \cite[Theorem 3.1]{FBS} combined with the next result.
\begin{lema}\label{lema.A.0}
Let $G\colon \R_+\to\R_+$ be an Orlicz function verifying the $\Delta_2$ condition and let $A\colon \R^n\to\R^n$ be a bounded magnetic potential. Then
$$
W^{s,G}(\R^n) = W^{s,G}_A(\R^n).
$$
Moreover, there exists $C>0$ depending on $n$, $s$, $\|A\|_\infty$ and $\C$ such that
$$
I_{s,G}^A(u)\le C \left(I_{s,G}(u) + \left(\frac{1}{s} + \frac{1}{1-s}\right)I_G(u)\right),
$$
$$
I_{s,G}(u)\le C \left(I_{s,G}^A(u) + \left(\frac{1}{s} + \frac{1}{1-s}\right)I_G(u)\right).
$$
\end{lema}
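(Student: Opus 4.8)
The plan is to deduce the two inequalities from a single pointwise comparison between the magnetic and the non-magnetic Hölder quotients and then integrate. By Remark \ref{equiv.I} it is enough to prove the estimates for the functionals $\tilde I_{s,G}^A$, $\tilde I_{s,G}$ (the latter being $\tilde I_{s,G}^A$ with $A\equiv 0$) and $\tilde I_G$. Writing $D_s u(x,y):=\frac{u(x)-u(y)}{|x-y|^s}$ for the ordinary (non-magnetic) Hölder quotient, a direct computation gives
$$
D_s^A u(x,y)-D_s u(x,y)=\frac{\big(1-e^{i(x-y)A(\frac{x+y}{2})}\big)\,u(y)}{|x-y|^s}.
$$
Using the elementary bound $|1-e^{it}|\le\min\{|t|,2\}$ together with $\big|(x-y)A(\tfrac{x+y}{2})\big|\le\|A\|_\infty|x-y|$, this yields the key pointwise estimate
$$
\big|D_s^A u(x,y)-D_s u(x,y)\big|\le C_A\,\psi_s(x-y)\,|u(y)|,\qquad \psi_s(h):=\min\{|h|^{1-s},|h|^{-s}\},
$$
with $C_A:=\max\{\|A\|_\infty,2\}$; note that $0\le\psi_s\le 1$ on all of $\R^n$.

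Next I would insert this into $G$. From $|D_s^A u|\le|D_s u|+|D_s^A u-D_s u|$ and the consequence \eqref{P4} of the $\Delta_2$ condition, together with the bound $G(C_A t)\le\C^{\kappa}G(t)$ (for $\kappa\in\N$ with $2^{\kappa-1}\le C_A<2^{\kappa}$), already used in the proof of Lemma \ref{lema.reg}, and property \eqref{P5} applied with $b=\psi_s(x-y)\le1$, one obtains
$$
G\big(|D_s^A u(x,y)|\big)\le\tfrac{\C}{2}\,G\big(|D_s u(x,y)|\big)+\tfrac{\C^{1+\kappa}}{2}\,\psi_s(x-y)\,G\big(|u(y)|\big).
$$
Integrating over $\R^n\times\R^n$ against $|x-y|^{-n}\,dx\,dy$, the first term on the right becomes $\tfrac{\C}{2}\tilde I_{s,G}(u)$; for the second I would substitute $h=x-y$, apply Fubini's theorem and use $\intr G(|u(y)|)\,dy=\tilde I_G(u)$, after evaluating
$$
\intr\frac{\psi_s(h)}{|h|^n}\,dh=n\omega_n\int_0^1 r^{-s}\,dr+n\omega_n\int_1^\infty r^{-s-1}\,dr=n\omega_n\Big(\frac{1}{1-s}+\frac{1}{s}\Big)
$$
by splitting into $|h|<1$ and $|h|\ge1$ and passing to polar coordinates. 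This produces $\tilde I_{s,G}^A(u)\le C\big(\tilde I_{s,G}(u)+(\tfrac1s+\tfrac1{1-s})\tilde I_G(u)\big)$, hence the first inequality of the lemma after one more application of Remark \ref{equiv.I}.

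The reverse inequality follows from exactly the same computation with the roles of $D_s^A u$ and $D_s u$ interchanged, using $|D_s u|\le|D_s^A u|+|D_s^A u-D_s u|$ and the same error bound. Finally, since every $u\in L^G(\R^n;\CC)$ has $I_G(u)<\infty$, the two inequalities show that $I_{s,G}^A(u)<\infty$ if and only if $I_{s,G}(u)<\infty$, which is precisely the set equality $W^{s,G}_A(\R^n)=W^{s,G}(\R^n)$. I do not expect a genuine obstacle here; the only delicate point is the bookkeeping of the $\Delta_2$ constants when extracting the factor $C_A$ from inside $G$ (one cannot invoke \eqref{cotas}, since \eqref{lieberman} is not assumed in this lemma), and checking that $\psi_s\le1$ so that \eqref{P5} can be used rather than only the crude convexity bound.
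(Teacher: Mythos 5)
Your argument is correct and is essentially the paper's own proof: the same decomposition $D_s^A u - D_s u = (1-e^{i(x-y)A(\frac{x+y}{2})})u(y)|x-y|^{-s}$, the same bound $|1-e^{it}|\le\min\{|t|,2\}$, the same split at $|x-y|=1$ yielding the factor $\frac{1}{s}+\frac{1}{1-s}$, and the same symmetric argument for the reverse inequality. Your explicit bookkeeping with $\psi_s\le 1$, property \eqref{P5} and the $\Delta_2$-based extraction of the constant $C_A$ (rather than \eqref{cotas}, which is indeed unavailable under the hypotheses of this lemma) is only a more careful writing of the step the paper leaves implicit.
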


\begin{proof}
By Remark \ref{equiv.I} is enough to prove the lemma for the functionals $\tilde I_{s,G}^A$, $\tilde I_G$ and $\tilde I_{s,G}$, where
$$
\tilde I_{s, G}(u) := \iint_{\R^n\times\R^n} G(|D_s u(x,y)|)\, \frac{dxdy}{|x-y|^n},
$$ 
and $D_s u(x,y) := \frac{u(x)-u(y)}{|x-y|^n} = D_s^0 u(x,y)$.

Assume first that $u\in W^{s,G}(\R^n)$. Then
$$
|D_s^A u(x,y)| \le |D_s u(x,y)| + \frac{|1-e^{i(x-y)A(\frac{x+y}{2})}|}{|x-y|^s} |u(y)|.
$$
Therefore, from \eqref{H2} we obtain
$$
G(|D_s^A u(x,y)|) \le \C \left( G(|D_s u(x,y)|) + G\left(\frac{|1-e^{i(x-y)A(\frac{x+y}{2})}|}{|x-y|^s} |u(y)|\right)\right)
$$
and so
$$
\tilde I^A_{s,G}(u)\le \C\left(\tilde I_{s,G}(u) + \iint_{\R^n\times\R^n} G\left(\frac{|1-e^{i(x-y)A(\frac{x+y}{2})}|}{|x-y|^s} |u(y)|\right)\, \frac{dxdy}{|x-y|^n}\right).
$$

Now, using that 
$$
|1-e^{i(x-y)A(\frac{x+y}{2})}| \le \begin{cases}
2 & \text{if } |x-y|\ge 1\\
\|A\|_\infty |x-y| & \text{if } |x-y|<1,
\end{cases}
$$
the last integral is bounded as
\begin{align*}
&\iint_{\R^n\times\R^n} G(|1-e^{i(x-y)A(\frac{x+y}{2})}| |u(y)|)\, \frac{dxdy}{|x-y|^n} \le\\
&\intr \left(\int_{|x-y|<1} G(\|A\|_\infty |x-y|^{1-s} |u(y)|)\, \frac{dx}{|x-y|^n} + \int_{|x-y|\ge 1} G(2|x-y|^{-s}|u(y)|)\, \frac{dx}{|x-y|^n}\right)\, dy\\
&\le C\intr G(|u(y)|) \left(\int_{|x-y|<1} \frac{dx}{|x-y|^{n+s-1}} + \int_{|x-y|\ge 1} \frac{dx}{|x-y|^{n+s}}\right)\, dy\\
&= C\left(\frac{1}{s} + \frac{1}{1-s}\right) \tilde I_G(u).
\end{align*}

So we arrive at 
$$
\tilde I_{s,G}^A(u)\le C\left(\tilde I_{s,G}(u) + \left(\frac{1}{s} + \frac{1}{1-s}\right)\tilde I_G(u)\right).
$$

On the other hand, if $u\in W^{s,G}_A(\R^n)$, 
$$
|D_s u(x,y)|\le |D_s^A u(x,y)| + \frac{|1-e^{i(x-y)A(\frac{x+y}{2})}|}{|x-y|^s} |u(y)|
$$
and arguing exactly as before, we obtain
$$
\tilde I_{s,G}(u)\le C\left(\tilde I_{s,G}^A(u) + \left(\frac{1}{s} + \frac{1}{1-s}\right)\tilde I_G(u)\right).
$$
The proof is complete.
\end{proof}

\section{A BBM formula in $W^{s,G}_A(\R^n)$}
In this section we prove our first main results. Our proof makes use of the following two key lemmas.

\begin{lema} \label{teo1}
Let $G$ be an Orlicz function satisfying \eqref{H2} and let $A$ be a Lipschitz magnetic field. Then there exists a constant $C$ depending on $n$, $\|A\|_\infty$, $\|\nabla A\|_\infty$ and $\C$, the constant in the $\Delta_2$ condition, such that
$$
I_{s,G}^A(u) \leq C\left( \left(\frac{1}{1-s} + \frac{1}{s}\right) I_G(u) + \frac{1}{1-s} I_G^A(u)\right).
$$
\end{lema}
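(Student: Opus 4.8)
The plan is to compare the magnetic Hölder quotient $D_s^A u(x,y)$ with the nonlocal gradient-type quantity that appears in $I_G^A$, namely $\nabla u - iAu$, plus a remainder controlled by $I_G(u)$. By Remark \ref{equiv.I} it suffices to work with $\tilde I_{s,G}^A$ and $\tilde I_G$. Writing $h = y-x$ and changing variables, we decompose
$$
D_s^A u(x,x+h) = \frac{u(x) - e^{-ihA(x+h/2)}u(x+h)}{|h|^s}
$$
into two pieces: the ``zero-field'' difference quotient $\frac{u(x)-u(x+h)}{|h|^s}$ and the ``gauge'' term $\frac{(1-e^{-ihA(x+h/2)})u(x+h)}{|h|^s}$. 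For the gauge term, Lipschitz continuity (indeed already boundedness) of $A$ gives $|1-e^{-ihA(x+h/2)}| \le \min\{2, \|A\|_\infty|h|\}$, so after applying \eqref{H2} and integrating, this contributes a term bounded by $C(\tfrac1s + \tfrac{1}{1-s})\tilde I_G(u)$ exactly as in the proof of Lemma \ref{lema.A.0}. So the real content is to bound $\tilde I_{s,G}(u)$, the $A=0$ nonlocal modular, by $C(\tfrac1s + \tfrac{1}{1-s})I_G(u) + \tfrac{C}{1-s} I_G^A(u)$, and then to relate $I_G^A(u)$ (which involves $\nabla u - iAu$) to the local $g$-energy; but since $I_G^A(u)$ already appears on the right-hand side, that relation is not needed.

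Thus I would focus on the estimate $\tilde I_{s,G}(u) \le C(\tfrac1s + \tfrac{1}{1-s}) I_G(u) + \tfrac{C}{1-s} I_G^A(u)$. Split the integration region into $|h|\ge 1$ and $|h|<1$. On $|h|\ge1$ use $|D_s u(x,x+h)| \le 2|h|^{-s}\max\{|u(x)|,|u(x+h)|\}$ together with \eqref{cotas} and \eqref{H2} to get the bound $\tfrac{C}{s} I_G(u)$ as in the earlier lemmas. On $|h|<1$, the key is the pointwise identity (for $u \in W^{1,1}_{\mathrm{loc}}$, which we may assume, else the right side is infinite)
$$
u(x+h) - u(x) = \int_0^1 \nabla u(x+th)\cdot h\, dt,
$$
and, inserting the phase, $u(x) - e^{-ihA(x+h/2)}u(x+h) = -\int_0^1 \tfrac{d}{dt}\big(e^{-ithA(x+th/2)}u(x+th)\big)\,dt$ up to an error coming from the mismatch between $A(x+h/2)$ and $A(x+th/2)$; here is exactly where $\|\nabla A\|_\infty$ enters. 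One computes $\tfrac{d}{dt}\big(e^{-ithA(x+th/2)}u(x+th)\big) = e^{-ithA(x+th/2)}\big((\nabla u - iAu)(x+th)\cdot h + O(\|\nabla A\|_\infty |h|^2 |u|)\big)$, so that
$$
|D_s u(x,x+h)| \le C\int_0^1 |(\nabla u - iAu)(x+th)|\,|h|^{1-s}\,dt + C\|\nabla A\|_\infty |h|^{2-s}\int_0^1 |u(x+th)|\,dt + (\text{gauge term}).
$$
Apply \eqref{H2} to split, then Jensen's inequality in the $t$-integral, then Fubini (the substitution $w = x+th$ has Jacobian $1$ for each fixed $t$), and then integrate $|h|^{-n}$ against $|h|^{1-s}$ or $|h|^{2-s}$ over $|h|<1$, producing factors $\tfrac{1}{1-s}$. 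The first term yields $\tfrac{C}{1-s} I_G^A(u)$ (using Remark \ref{equiv.I} to pass between $G(|\cdot|)$ and $G(|\Re\cdot|)+G(|\Im\cdot|)$ for the vector $\nabla u - iAu$), the second yields $\tfrac{C}{1-s}I_G(u)$, and the gauge term was already handled.

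The main obstacle, and the only genuinely new computation compared with the $A=0$ case of \cite{FBS}, is the careful differentiation of the phase factor $e^{-ithA(x+th/2)}$: one must verify that the midpoint prescription $A(\tfrac{x+y}{2})$ in the definition of $D_s^A$ interacts correctly with the path $x+th$, so that the derivative reconstructs precisely $\nabla u - iAu$ (evaluated along the path) with an error that is genuinely $O(\|\nabla A\|_\infty |h|^2|u|)$ and not merely $O(\|A\|_\infty |h||u|)$ — the latter would only give an $I_G(u)$ contribution, which is fine, but getting the sharp structure requires using that the base point of $A$ is the midpoint, not an endpoint. After that, everything is a routine combination of \eqref{H2}, \eqref{cotas}, Jensen, and Fubini, organized exactly as in the proofs of Lemmas \ref{lema.reg}, \ref{lema.trunc} and \ref{lema.A.0}.
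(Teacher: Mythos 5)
Your proposal is correct and follows essentially the same route as the paper: split at $|x-y|=1$, bound the far region by the crude $\Delta_2$ estimate to get the $\tfrac1s I_G(u)$ term, and on the near region apply the fundamental theorem of calculus to $t\mapsto e^{-ithA(x+th/2)}u(x+th)$ (the paper's $\phi$ along the segment), where differentiating the mid-point phase produces exactly $(\nabla u - iAu)$ plus an $O(\|\nabla A\|_\infty|h|^2|u|)$ error, yielding $\tfrac{1}{1-s}\bigl(I_G^A(u)+I_G(u)\bigr)$ after Jensen, \eqref{P5} and Fubini. The only cosmetic differences are your preliminary zero-field/gauge decomposition, which becomes superfluous once you estimate the full magnetic quotient directly on $|h|<1$, and your use of absolute continuity on lines in place of the paper's density-plus-Fatou extension from $C^1_c$.
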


\begin{proof}
Once again, by Remark \ref{equiv.I} it is equivalent to prove the result for the functionals $\tilde I_{s,G}^A$, $\tilde I_G$ and $\tilde I_G^A$.

Let us first assume that $u\in C^1_c(\R^n;\CC)$ and split $\tilde I_{s,G}^A(u)$ as follows
$$
\iint_{\R^n\times\R^n} G(|D_s^A u(x,y)|) \frac{dx\,dy}{|x-y|^n} :=I_1+I_2,
$$
where $I_1$ denotes the integral over $|x-y|<1$ and $I_2$ over its complement.

Let us bound $I_1$.  For a fixed $x\in\R^n$, let us denote for the moment $\phi(y) = e^{i(x-y)A(\frac{x+y}{2})}u(y)$. Therefore we can write
$$
\phi(x) - \phi(y) = \int_0^1 \tfrac{d}{dt} \phi(tx+(1-t)y) \,dt= \int_0^1 \nabla \phi(tx+(1-t)y)\cdot (x-y) \,dt.
$$

A direct computation gives that for a.e. $x,y\in \R^n$
\begin{align*}
\nabla \phi(y) &=e^{i(x-y)A\left( \frac{x+y}{2}\right)} \nabla u(y)- i \left(A\left(\tfrac{x+y}{2}\right) + \tfrac{y-x}{2}\nabla A\left(\tfrac{x+y}{2}\right)  \right) e^{i(x-y) A\left( \frac{x+y}{2}\right)} u(y)
\end{align*}
from where,   
\begin{align*}
|\nabla \phi(y)|  &\leq |\nabla u(y)-iA(y)u(y)|+ (|A(\tfrac{x+y}{2})-A(y)|+\tfrac12 \|\nabla A\|_\infty |x-y|)|u(y)|\\
&\leq |\nabla u(y)-iA(y)u(y)|+\|\nabla A\|_\infty |x-y||u(y)|.
\end{align*}
Since $|x-y|<1$, we get
\begin{align*}
|D_s^A u(x,y)| \leq& \int_0^1 |\nabla u(tx+(1-t)y)-iA(tx+(1-t)y) u(tx+(1-t)y)||x-y|^{1-s}\,dt\\
&+ \|\nabla A\|_\infty \int_0^1 |u(tx+(1-t)y)||x-y|^{1-s}\,dt.
\end{align*}
Now, by using Jensen's inequality and \eqref{H2}
\begin{align*}
G(|D_s^A u(x,y)|) \leq C&\left( \int_0^1 G\left(|\nabla u(tx+(1-t)y)-iA(tx+(1-t)y) u(tx+(1-t)y)||x-y|^{1-s} \right)\,dt\right. \\
&+ \left. \int_0^1 G\left( |u(tx+(1-t)y)||x-y|^{1-s} \right)\,dt\right),
\end{align*}
where $C$ depends on $\C$ and $\|\nabla A\|_\infty$.

Then, since $|x-y|<1$, from \eqref{P5}
\begin{align*}
I_1\leq& C\left(\iint_{|x-y|<1} \int_0^1 G\left(|\nabla u(tx+(1-t)y)-iA(tx+(1-t)y) u(tx+(1-t)y)|  \right)\,dt\frac{dxdy}{|x-y|^{n-1+s}}\right.\\
&+ \left. \iint_{|x-y|<1}  \int_0^1  G\left(|u(tx+(1-t)y)|   \right)\,dt \frac{dxdy}{|x-y|^{n-1+s}}\right) \\
\leq& C\left(\int_{|z|<1} |z|^{1-s-n} \,dz \right) \left( \int_{\R^n} G\left(|\nabla u -iA(x) u | \right)\,dx + \int_{\R^n}   G(|u|)\,dx \right).
\end{align*}
Finally, by using polar coordinates we get
\begin{equation} \label{cota.I1}
I_1 \leq C\frac{1}{1-s} \left( \tilde I_{1,G}^A(u) + \tilde I_G(u) \right),
\end{equation}
with $C$ depending on $n$, $\C$ and $\|\nabla A\|_\infty$.

The term $I_2$ can be bounded using \eqref{cotas} and \eqref{H2}. Indeed, 
\begin{align*}
I_2&\leq \iint_{|x-y|\geq 1} G(|u(x)-e^{i(x-y)A(\frac{x+y}{2})}u(y)|)\, \frac{dxdy}{|x-y|^{n+s}}\\
&\leq \C \iint_{|x-y|\geq 1} (G(|u(x))|)+G(|u(y)|))\,\frac{dxdy}{|x-y|^{n+s}}\\
&= \C \int_{|h|\ge 1} \intr (G(|u(x)|) + G(|u(x-h)|))\,dx\frac{dh}{|h|^{n+s}}\\
&= 2\C \int_1^\infty r^{-s-1}\,dr \int_{\R^n} G(|u(x)|)\,dx\\
&= \frac{2\C n \omega_n}{s} \tilde I_G(u).
\end{align*}

This concludes the proof of the lemma for $u\in C^1_c(\R^n)$.

Finally, by Lemma \ref{es.denso}, given $u\in W^{1,G}_A(\R^n)$ one can take a sequence $\{u_k\}_{k\in\N}\subset C^1_c(\R^n;\CC)$ such that $u_k\to u$ in $W^{1,G}_A(\R^n)$ and without loss of generality, we may assume that $u_k\to u$ a.e. in $\R^n$. It implies that
$$
G(|D_s^A u_k(x,y)|)\to G(|D_s^A u(x,y)|)\quad \text{a.e. in } \R^n\times\R^n.
$$
Therefore, by Fatou's Lemma, we obtain that
\begin{align*}
\tilde I_{s,G}^A(u)&\le \liminf_{k\to\infty} \tilde I_{s,G}^A(u_k) \le C  \lim_{k\to\infty}\left( \left( \frac{1}{s} + \frac{1}{1-s}\right) \tilde I_G(u_k) +   \frac{1}{1-s} \tilde I_G^A(u_k) \right)\\
&= C  \left( \left( \frac{1}{s} + \frac{1}{1-s}\right) \tilde I_G(u) +   \frac{1}{1-s} \tilde I_G^A(u) \right).
\end{align*} 
The proof is now complete.
\end{proof}

\begin{lema} \label{lemma.uno}
Let $G$ be an Orlicz function satisfying \eqref{H2} such that the limit in \eqref{phitilde} exists and $u\in C^2_c(\R^n;\CC)$. Then, for every fixed $x\in \R^n$ we have that
\begin{equation} \label{eq.bbm}
\lim_{s\uparrow 1} (1-s) \int_{\R^n} G(|\Re D_s^A u(x,y)|) \frac{dy}{|x-y|^n} =  \tilde G(|\Re(\nabla u(x)-iA(x)u(x))|)
\end{equation}
and
\begin{equation} \label{eq.bbm2}
\lim_{s\uparrow 1} (1-s) \int_{\R^n} G(|\Im D_s^A u(x,y)|) \frac{dy}{|x-y|^n} = \tilde G(|\Im(\nabla u(x)-iA(x)u(x))|),
\end{equation}
where $\tilde G$ is defined in \eqref{phitilde}.
\end{lema}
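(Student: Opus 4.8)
The plan is to fix $x$, pass to polar coordinates centred at $x$, perform a Taylor expansion of the magnetic Hölder quotient for small increments, and then recognise the definition \eqref{phitilde} of $\tilde G$ by means of the rotational invariance of the surface measure on $\Sn$. I describe the argument for \eqref{eq.bbm}; the proof of \eqref{eq.bbm2} is identical, replacing $\Re$ by $\Im$. Throughout I use that $u\in C^2_c(\R^n;\CC)$, say with $\supp u\subseteq B_{R_0}$, and that $A$ is Lipschitz.

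Writing $y=x-h$, $h=r\omega$ with $r=|h|$ and $\omega\in\Sn$, one has $\frac{dy}{|x-y|^n}=\frac{dr}{r}\,dS_\omega$, so that
$$
(1-s)\int_{\R^n} G(|\Re D_s^A u(x,y)|)\,\frac{dy}{|x-y|^n}=(1-s)\int_0^\infty\int_{\Sn} G(|\Re D_s^A u(x,x-r\omega)|)\,dS_\omega\,\frac{dr}{r}.
$$
Set $V:=\nabla u(x)-iA(x)u(x)=V_R+iV_I$ with $V_R,V_I\in\R^n$. Arguing as in the proof of Lemma \ref{teo1}, but now keeping the leading term, one uses $u(x-h)=u(x)-h\cdot\nabla u(x)+O(|h|^2)$, the Lipschitz bound $|A(x-\tfrac h2)-A(x)|\le\tfrac12\|\nabla A\|_\infty|h|$ and $|e^{it}-1-it|\le\tfrac12 t^2$ to obtain, for $|h|<1$,
$$
u(x)-e^{ih\cdot A(x-h/2)}u(x-h)=h\cdot V+R(x,h),\qquad |R(x,h)|\le C|h|^2,
$$
with $C$ depending on $n$, the $C^2$–norm of $u$, $\|A\|_\infty$ and $\|\nabla A\|_\infty$. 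Hence for $0<r<\delta\le1$ one has $\Re D_s^A u(x,x-r\omega)=r^{1-s}\,\omega\cdot V_R+\widetilde R$ with $|\widetilde R|\le Cr^{2-s}$, and both $|\Re D_s^A u(x,x-r\omega)|$ and $r^{1-s}|\omega\cdot V_R|$ are bounded by $M:=|V|+C$. Since $G\in C^1$ has nondecreasing derivative $g$, it is $g(M)$–Lipschitz on $[0,M]$, so $\bigl|G(|\Re D_s^A u(x,x-r\omega)|)-G(r^{1-s}|\omega\cdot V_R|)\bigr|\le g(M)Cr^{2-s}$; integrating this over $r\in(0,\delta)$ and $\omega\in\Sn$ and multiplying by $(1-s)$ gives a quantity whose $\limsup$ as $s\uparrow1$ is at most $g(M)C|\Sn|\,\delta$.

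Next I would identify the leading contribution. By rotational invariance of $dS$, for $e:=V_R/|V_R|$ (the case $V_R=0$ being trivial) $\int_{\Sn}G(|V_R|\,|\omega\cdot e|\,r^{1-s})\,dS_\omega=\int_{\Sn}G(|V_R|\,|z_N|\,r^{1-s})\,dS_z$, hence
$$
(1-s)\int_0^\delta\int_{\Sn} G(r^{1-s}|\omega\cdot V_R|)\,dS_\omega\,\frac{dr}{r}=(1-s)\int_0^\delta\int_{\Sn} G(|V_R|\,|z_N|\,r^{1-s})\,dS_z\,\frac{dr}{r};
$$
as $r^{1-s}\le1$ for $r\in[\delta,1]$ and $G$ is increasing, replacing $\int_0^\delta$ by $\int_0^1$ changes the right–hand side by at most $(1-s)G(|V_R|)|\Sn|\log(1/\delta)\to0$, so by the assumed existence of the limit \eqref{phitilde} this expression converges to $\tilde G(|V_R|)$ as $s\uparrow1$. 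It remains to discard the ranges $\{\delta\le r\le R_1\}$ and $\{r\ge R_1\}$, where $R_1:=\max\{R_0+|x|,\,1\}$. On the first range $r^{-s}\le\delta^{-1}$, so $|\Re D_s^A u(x,x-r\omega)|\le 2\|u\|_\infty\delta^{-1}$ and the contribution is $\le(1-s)|\Sn|G(2\|u\|_\infty\delta^{-1})\log(R_1/\delta)\to0$; on the second, $u(x-r\omega)=0$, so $G(|\Re D_s^A u(x,x-r\omega)|)=G(|\Re u(x)|r^{-s})\le r^{-s}G(|\Re u(x)|)$ by \eqref{P5}, giving a contribution $\le(1-s)|\Sn|G(|\Re u(x)|)/s\to0$.

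Combining the four estimates, for every $\delta\in(0,1]$,
$$
\limsup_{s\uparrow1}\Bigl|(1-s)\int_{\R^n} G(|\Re D_s^A u(x,y)|)\,\frac{dy}{|x-y|^n}-\tilde G(|V_R|)\Bigr|\le g(M)\,C\,|\Sn|\,\delta,
$$
with $M=|V|+C$ independent of $\delta$ (recall $\delta\le1$), so letting $\delta\downarrow0$ proves \eqref{eq.bbm}, and the same argument with $V_R$ replaced by $V_I$ proves \eqref{eq.bbm2}. The main obstacle is exactly this last passage: the Taylor remainder in the small–increment regime does not vanish for a fixed $\delta$, only as $\delta\to0$, so the limit cannot be taken directly but must be extracted through a $\limsup$–then–$\delta\to0$ scheme — while making sure that every auxiliary bound (the Lipschitz constant of $G$ on the relevant interval, and the middle– and tail–range estimates) is uniform in $s$ near $1$.
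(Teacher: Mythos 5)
Your proof is correct and follows essentially the same route as the paper's: Taylor--expand $e^{ih\cdot A(x-h/2)}u(x-h)$ to write $D_s^A u(x,x-h)=V\cdot h\,|h|^{-s}+O(|h|^{2-s})$, use the local Lipschitz continuity of $G$ to replace the integrand by $G(r^{1-s}|\omega\cdot V_R|)$, invoke rotational invariance of $dS$ to recognize \eqref{phitilde}, and dispose of the far field via a $(1-s)\cdot O(1/s)$ bound. One correction: your closing claim that the Taylor remainder ``does not vanish for a fixed $\delta$, only as $\delta\to0$'' is false --- its contribution is $(1-s)\,g(M)\,C\,|\Sn|\,\delta^{2-s}/(2-s)\to 0$ as $s\uparrow 1$ for any fixed $\delta\le 1$, which is precisely what the paper exploits by taking $\delta=1$ outright; your $\limsup$--then--$\delta\downarrow0$ scheme is therefore valid but unnecessary.
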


\begin{proof}
Let us prove \eqref{eq.bbm}. The formula \eqref{eq.bbm2} follows analogously.

For each fixed $x\in \R^n$ we split the integral 
\begin{align*}
\int_{\R^n} G(|\Re D_s^A u(x,y)|)\, \frac{dx}{|x-y|^n}:=  I_1 + I_2,
\end{align*}
where $I_1$ denotes the integral over the set $\{y\in \R^n: |x-y|<1\}$, and $I_2$ over its complement. 

For each fixed $x\in\R^n$, let $\phi(y) = e^{i(x-y)A(\frac{x+y}{2})}u(y)$. Since $u\in C^2_c(\R^n;\CC)$, we have that $\phi\in C^2_c(\R^n;\CC)$ and hence we have
\begin{equation}\label{taylor.phi}
\phi(y) = \phi(x) + \nabla \phi(x) (y-x) + O(|x-y|^2),
\end{equation}
where the big-$O$ depends on the $C^2$ norm of $u$, on $\|A\|_\infty$ and on $\|\nabla A\|_\infty$.

Observe that 
\begin{equation}\label{nabla.phi}
\phi(x) = u(x) \quad \text{and}\quad \nabla\phi(x) = (\nabla u(x) - iA(x)u(x))(x-y).
\end{equation}

Combining \eqref{taylor.phi} and \eqref{nabla.phi} we arrive at
\begin{align*}
D_s^A u(x,y) = (\nabla u(x) - iA(x)u(x))\frac{(x-y)}{|x-y|^s} + O(|x-y|^{2-s}).
\end{align*}

Hence, since $G$ is Lipschitz continuous, for any $x,y\in \R^n$, $x\neq y$ we have that
\begin{align*}
&\left| G(|\Re D_s^A u(x,y)|) - G\left(\left |\Re\left((\nabla u(x)-iA(x)u(x)) \frac{x-y}{|x-y|^s}\right)\right|\right) \right|\\
&\quad \leq C \left|D_s^A u(x,y) - (\nabla u(x)-iA(x)u(x)) \frac{(x-y)}{|x-y|^s}\right|\\
&\quad \leq  C |x-y|^{2-s}.
\end{align*}

From this estimate it immediately follows that
\begin{align*}
\lim_{s\uparrow 1} (1-s) \; I_1
& = \lim_{s\uparrow 1} (1-s) \int_{|x-y|<1} G\left(\left|\Re\left((\nabla u(x)-iA(x)u(x)) \frac{x-y}{|x-y|^s}\right)\right|\right)\frac{ dy}{|x-y|^n}.
\end{align*}

Observe now the following. If $z\in\CC^n$, 
\begin{align*}
\int_{|h|<1} G\left(\frac{|\Re(zh)|}{|h|^s}\right)\, \frac{dh}{|h|^n} &= \int_{|h|<1} G\left(|\Re z|\frac{|h_n|}{|h|^s}\right) \, \frac{dh}{|h|^n}  = \int_0^1 \int_{\Sn} G(|\Re z| |w_n| r^{1-s}) dS_w \frac{dr}{r}.
\end{align*}

Therefore, in view of definition \eqref{phitilde}, we get
\begin{equation} \label{des.I}
\lim_{s\uparrow 1} (1-s)I_1 = \tilde G (|\Re\left(\nabla u(x)-iA(x)u(x)\right)|).
\end{equation}

Finally, since $G$ is increasing and \eqref{cotas} holds, $I_2$ is bounded as
\begin{align} \label{cota.I2}
I_2 &\leq  \int_{|x-y|\geq 1}  \frac{G (2\|u\|_\infty)}{|x-y|^{n+s}} \, dy = G (2\|u\|_\infty) n\omega_n \int_1^\infty  \frac{1}{r^{1+s}}\,dr = \frac{n\omega_n}{s}G (2\|u\|_\infty),
\end{align}
from where we can derive that
\begin{equation} \label{des.II}
\lim_{s\uparrow 1} (1-s)I_2 = 0.
\end{equation}

Summing up, from \eqref{des.I} and \eqref{des.II} we obtain \eqref{eq.bbm}.
\end{proof}

\begin{proof} [Proof of Theorem \ref{main1}]
Given $u\in C_c^2(\R^n;\CC)$ with $\supp(u)\subset B_R(0)$, in view of Lemma \ref{lemma.uno} it only remains to show the existence of an integrable majorant  for
$$
(1-s)F_s^\Re(x):=(1-s)\int_{\R^n} G(|\Re D_s^A u(x,y)|)\, \frac{dy}{|x-y|^n}
$$
and for
$$
(1-s)F_s^\Im(x):=(1-s)\int_{\R^n} G(|\Im D_s^A u(x,y)|)\, \frac{dy}{|x-y|^n}.
$$

We perform all our computations for $F_s^\Re$, since the ones for $F_s^\Im$ are completely analogous. Without loss of generality we can assume that $R>1$.

First, we analyze the behavior of $F_s^\Re(y)$ for small values of $y$. When $|y|<2R$ we can write split the integral $F_s^\Re(x)$ as $I_1+I_2$, where the first term corresponds to integrate over $B_1(x)$ and the second one over its complement.

Arguing as in \eqref{cota.I1} and \eqref{cota.I2} we obtain that
\begin{align} \label{dest1}
I_1 \leq \C \frac{n\omega_n}{1-s} \left[ G(\|\nabla u\|_\infty + \|A\|_\infty \|u\|_\infty) + \|\nabla A\|_\infty G(\|u\|_\infty) \right]
\end{align}
and
\begin{align} \label{dest2}
I_2&\leq  \frac{n\omega_n}{s}G(2\|u\|_\infty).
\end{align}

When $|x|\geq 2R$ the function $u$ vanishes and we have that
$$
F_s^\Re(x)= \int_{B_R(0)} G\left( \frac{|\Re(e^{i(x-y)A(\frac{x+y}{2})}u(y)|}{|x-y|^s} \right)\, \frac{dy}{|x-y|^n}.
$$
Since $|x-y| \geq |x|-R\geq \frac12|x|$, from the monotonicity of $G$, \eqref{H2} and \eqref{P5} (since $|x|\geq 2$) we get 
\begin{align} \label{dest3}
\begin{split}
|F_s^\Re(x)|&\leq \frac{2^n}{|x|^n}\int_{B_R(0)} G\left( \frac{2^s |u(y)| }{|x|^s} \right) \, dy \leq \frac{C}{|x|^n}\int_{B_R(0)} G\left( \frac{ |u(y)| }{|x|^s} \right) \, dy\\
&\leq \frac{\C}{|x|^{n+s}}\int_{B_R(0)} G(|u(y)|) \,dy \leq \frac{\C}{|x|^{n+\frac12 }}\int_{B_R(0)} G(|u(y)|) \, dy,
\end{split}	
\end{align}
for any $s\ge \frac12$.

From \eqref{dest1}, \eqref{dest2} and \eqref{dest3}, there is $C=C(n,G,u)$ independent of $s$ such that
$$
(1-s)|F_s^\Re(x)|\leq   C\left(\chi_{|x|\leq R}(x) + \frac{1}{|x|^{n+\frac12}} \chi_{|x|\geq R}(x) \right) \in L^1(\R^n).
$$

Then, from Lemma \ref{lemma.uno} and the Dominated Convergence Theorem the result follows for any $u\in C_c^2(\R^n;\CC)$.

Let us extend the result for any $u\in W^{1,G}_A(\R^n)$. According to Proposition \ref{es.denso}, let $\{u_k\}_{k\in\N}\subset C_c^2(\R^n;\CC)$ be a sequence such that $u_k\to u$ in $W^{1,G}_A(\R^n)$. Then
\begin{align} \label{desig00}
\begin{split}
\left| (1-s)I_{s,G}^A(u) - I_{\tilde G}^A(u) \right| \leq &  (1-s) \left| I_{s,G}^A(u) - I_{s,G}^A(u_k) \right| + \left|(1-s) I_{s,G}^A(u_k) - I_{\tilde G}^A (u_k) \right|\\
& + \left|I_{\tilde G}^A(u_k) - I_{\tilde G}^A(u)\right|.
\end{split}
\end{align}
Let us fix $\ve>0$. Since the modular $I_{\tilde G}^A$ is continuous on $W^{1,G}_A(\R^n)$ and since $u_k\to u$ in $W^{1,G}_A(\R^n)$, it follows that there exists $k_0$ such that for $k\geq k_0$, 
$$
	|I_{\tilde G}^A(  u_k )-I_{\tilde G}^A(u)|\leq \frac{\ve}{2},
$$
and using \cite[Lemma 2.6]{FBS} one can take $\delta>0$ (to be fixed) such that
\begin{equation} \label{desig1}
(1-s) | I_{s,G}^A(u) -  I_{s,G}^A(u_k) | \leq (1-s)\delta  I_{s,G}^A(u_k)+ (1-s)C_\delta I_{s,G}^A(u-u_k).
\end{equation}
Observe that from Lemma \ref{teo1} we have that $(1-s)  I_{s,G}^A(u_k) \leq K$ for some positive constant $K$. Moreover, again from Lemma \ref{teo1}, there is some $k_1$ such that for $k\geq k_1$ it holds that $
(1-s)I_{s,G}^A(u-u_k) \leq \frac{\ve}{4C_\delta}$. Consequently, it follows that \eqref{desig1} can be bounded as
$$
(1-s) | I_{s,G}^A(u) -  I_{s,G}^A(u) |   \leq \delta K + \frac{\ve}{4}
$$
for $k\geq k_1$. Hence, choosing $\delta=\frac{\ve}{4K}$ we find that \eqref{desig00} is upper bounded as
$$
\left| (1-s)I_{s,G}^A(u) - I_{1,\tilde G}^A(u) \right| \leq \ve + \left|(1-s) I_{s,G}^A(u_k) - I_{1,\tilde G}^A (u_k) \right|
$$
for all $k\geq \max\{k_0,k_1\}$. Finally, the desired result follows by fixing a value of $k\geq \max\{k_0,k_1\}$ and taking limit as $s\uparrow 1$.

To finish the proof, let us see that if $u\in L^G(\R^n;\CC)$ is such that
$$
\liminf_{s\uparrow 1} (1-s) I_{s,G}^A(u)<\infty,
$$
then $u\in W^{1,G}_A(\R^n)$.

Given $u\in L^G(\R^n;\CC)$, according to Lemmas \ref{lema.reg} and \ref{lema.trunc}, if we define the approximating family 
$$
	u_{k,\ve}=\rho_\ve * (u\eta_k) \in C_c^\infty(\R^n;\CC),
$$
it satisfies
$$
\liminf_{s\uparrow 1} (1-s) I_{s,G}(u_{k,\ve})<C,
$$
with $C$ independent on $\ve>0$ and $k\in\N$. 

The first part of this theorem gives that
$$
I_{1,\tilde G}^A( u_{k,\ve}) =\lim_{s\uparrow 1} (1-s) I_{s,G}^A(u_{k,\ve})<C,
$$
then, from Remark \ref{rema}, $\{u_{k,\ve}\}_{k\in\N, \ve>0}$ is bounded in $W^{1,G}_A(\R^n)$. Consequently, from Proposition \ref{propiedades1}, there exists a sequence $u_j=u_{k_j,\ve_j}$ with $k_j\to\infty$ and $\ve_k\downarrow 0$ and $\tilde u\in W^{1,G}_A(\R^n)$ such that $u_j\cd \tilde u$ weakly in $W^{1,G}_A(\R^n)$.  Moreover, since $u_{k,\ve} \to u$ in $L^G(\R^n;\CC)$ as $k\to\infty$, and $\ve\downarrow 0$, we can conclude that $\tilde u= u\in W^{1,G}_A(\R^n)$ as required.
\end{proof}

\section{Some consequences and applications}

In this final section, we show some immediate consequences of Theorem \ref{main1}. This section can be seen as a follow up of \cite[Section 6]{FBS} where the same type of applications were derived for the case of $A\equiv 0$.

Throughout this section $G$ will be an Orlicz function satisfying \eqref{lieberman} such that the limit in \eqref{phitilde} exists.

When working on a domain $\Omega\subset \R^n$ (bounded or not) it is useful to introduce the following notations.

The space $W^{1,G}_{A,0}(\Omega)$ denotes, as usual, is defined as the closure of $C^\infty_c(\Omega;\CC)$ with respect to the $\|\cdot\|_{1,A,G}-$norm.

In the fractional setting, we use the following definitions
$$
W^{s,G}_{A,0}(\Omega) := \{u\in W^{s,G}_A(\R^n) \colon u=0 \text{ a.e. in } \R^n\setminus \Omega \}.
$$

Alternatively, one can consider
$$
\widetilde{W}^{s,G}_A(\Omega) := \overline{C_c^\infty(\Omega)}^{\|\cdot\|_{s,A,G}}.
$$

In the classical case, i.e. when $G(t)=t^p$ and $A=0$, these spaces $W^{s,p}_0(\Omega)$ and $\widetilde{W}^{s,p}(\Omega)$ are known to coincide when $s<\tfrac{1}{p}$ or when $0<s<1$ and $\Omega$ has Lipschitz boundary. See \cite{DPV}.

In this paper, we shall not investigate the cases where these spaces $W^{s,G}_{A,0}(\Omega)$ and $\widetilde{W}^{s,G}_A(\Omega)$ coincide and use the space $W^{s,G}_{A,0}(\Omega)$ to illustrate our applications.

In what follows, every function $u\in L^G(\Omega;\CC)$ it will be assumed to be extended by 0 to $\R^n\setminus \Omega$.

Finally, observe that the inclusions
$$
W^{s,G}_{A,0}(\Omega)\subset W^{s,G}_A(\R^n)\subset L^G(\R^n;\CC)
$$
imply
$$
L^{G^*}(\Omega;\CC)\subset L^{G^*}(\R^n;\CC)\subset W^{-s,G^*}_A(\R^n)\subset W^{-s,G^*}_A(\Omega),
$$
where $W^{-s,G^*}_A(\Omega)$ denotes the (topological) dual space of $W^{s,G}_{A,0}(\Omega)$.

\subsection{Poincar\'e's  inequality}

A first consequence that we get is the Poincar\'e's inequality.

Poincar\'e's inequality in the magnetic setting is a straightforward consequence of the so-called {\em diamagnetic inequality}. This inequality for the classical setting is well-known (see for instance \cite[Theorem 7.21]{LossLieb})
\begin{thm}
Let $A\colon \Omega\to\R^n$ be a measurable magnetic potential such that $|A|<\infty$ a.e. in $\Omega$ and let $u\in W^{1,1}_\text{loc}(\R^n;\CC)$. Then the following {\em diamagnetic inequality} holds
\begin{equation}\label{DI}
|\nabla |u|(x)| \le |\nabla u(x) - iA(x) u(x)|,
\end{equation}
for a.e. $x\in\Omega$.
\end{thm}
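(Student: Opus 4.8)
The plan is to prove the pointwise diamagnetic inequality \eqref{DI} by first reducing to the case where $u$ is smooth and then passing to the general $W^{1,1}_{\text{loc}}$ setting by approximation. The key computational identity is that at any point $x$ where $|u(x)|>0$, one has $\nabla|u| = \Re\left(\frac{\bar u}{|u|}\nabla u\right)$. I would start from this and write, for such $x$,
\begin{align*}
\nabla|u|(x) &= \Re\left(\frac{\overline{u(x)}}{|u(x)|}\nabla u(x)\right) = \Re\left(\frac{\overline{u(x)}}{|u(x)|}(\nabla u(x) - iA(x)u(x))\right),
\end{align*}
where the second equality holds because $\Re\left(\frac{\overline{u(x)}}{|u(x)|}\cdot(-iA(x)u(x))\right) = \Re\left(-iA(x)|u(x)|\right) = 0$ since $A(x)|u(x)|$ is a real vector. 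Taking absolute values and using that multiplication by the unit-modulus scalar $\frac{\overline{u(x)}}{|u(x)|}$ does not increase the Euclidean norm, we get $|\nabla|u|(x)| \le |\nabla u(x) - iA(x)u(x)|$, which is exactly \eqref{DI} on the set $\{|u|>0\}$.

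Next I would handle the set $\{|u|=0\}$. On the interior of this set $\nabla|u| = 0$ a.e., so the inequality is trivial there; the subtle point is the boundary of the zero set, which has measure zero only after a further argument. The standard device (as in \cite[Theorem 7.21]{LossLieb}) is to regularize: replace $|u|$ by $|u|_\delta := \sqrt{|u|^2 + \delta^2} - \delta$ for $\delta>0$, which is smooth wherever $u$ is differentiable, compute
$$
\nabla|u|_\delta = \frac{\Re(\bar u\nabla u)}{\sqrt{|u|^2+\delta^2}} = \Re\left(\frac{\bar u}{\sqrt{|u|^2+\delta^2}}(\nabla u - iAu)\right),
$$
so that $|\nabla|u|_\delta| \le \frac{|u|}{\sqrt{|u|^2+\delta^2}}|\nabla u - iAu| \le |\nabla u - iAu|$ everywhere (a.e.). Then let $\delta\downarrow 0$: since $|u|_\delta \to |u|$ in, say, $L^1_{\text{loc}}$ and the gradients are bounded in $L^1_{\text{loc}}$ uniformly in $\delta$, one identifies the distributional limit and obtains $|\nabla|u|| \le |\nabla u - iAu|$ a.e. in $\Omega$.

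Finally, to cover the full hypothesis $u\in W^{1,1}_{\text{loc}}(\R^n;\CC)$ with merely measurable $A$ finite a.e., I would note that the entire argument above is already pointwise a.e. valid under these hypotheses once one uses that $\nabla u$ exists a.e. (by the $W^{1,1}_{\text{loc}}$ assumption) and that the chain-rule formula $\nabla|u|_\delta = \Re(\bar u\nabla u)/\sqrt{|u|^2+\delta^2}$ holds in $W^{1,1}_{\text{loc}}$ since $t\mapsto\sqrt{t+\delta^2}-\delta$ is Lipschitz — so no extra smoothing of $u$ is strictly needed beyond the $\delta$-regularization of the modulus. The main obstacle is the careful justification of the chain rule for $|u|_\delta$ and the passage $\delta\to 0$ at the level of weak derivatives on the zero set of $u$; everything else is the elementary algebra of complex numbers showing the magnetic term drops out of the real part. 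Since the paper explicitly cites \cite[Theorem 7.21]{LossLieb} for the classical case, I would simply invoke that reference for these technical points, presenting the short computation $\Re\!\big(\tfrac{\bar u}{|u|}(-iAu)\big)=0$ as the reason the magnetic potential is harmless.
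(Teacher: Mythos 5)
Your proof is correct and is precisely the argument of \cite[Theorem 7.21]{LossLieb}, which is exactly what the paper itself invokes for this statement (the paper offers no independent proof). The only imprecision is the aside that the boundary of $\{u=0\}$ ``has measure zero'' --- it need not, but your $\delta$-regularization together with the standard fact that $\nabla u=0$ a.e.\ on $\{u=0\}$ handles that set correctly regardless, so nothing is lost.
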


The fractional analog of \eqref{DI} was provided in \cite[Lemma 3.1 and Remark 3.2]{AvSq}, namely:
\begin{thm}
Let $A\colon \R^n\to\R^n$ be a measurable magnetic potential such that $|A|<\infty$ a.e. in $\R^n$ and let $u\colon \R^n\to \CC$ be a measurable function such that $|u|<\infty$ a.e. in $\R^n$. Then, the following {\em fractional diamagnetic inequality} holds
\begin{equation}\label{FDI}
||u(x)|-|u(y)||\le |e^{-i(x-y)A(\tfrac{x+y}{2})} u(x) - u(y)|,
\end{equation}
for a.e. $x,y\in\R^n$.
\end{thm}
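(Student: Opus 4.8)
The statement to prove is the fractional diamagnetic inequality \eqref{FDI}:
$$
||u(x)|-|u(y)||\le |e^{-i(x-y)A(\tfrac{x+y}{2})} u(x) - u(y)|.
$$

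The plan is to reduce the problem to a pointwise inequality for complex numbers. First I would fix $x,y\in\R^n$ at which $u(x),u(y)$ are finite complex numbers (which is almost every pair, by hypothesis), and set $\theta := (x-y)\cdot A(\tfrac{x+y}{2})\in\R$, $a := u(x)$, $b := u(y)$. The claim then becomes: for all $a,b\in\CC$ and all $\theta\in\R$,
$$
\bigl||a|-|b|\bigr| \le \bigl|e^{-i\theta} a - b\bigr|.
$$
The key observation is that $|e^{-i\theta}a| = |a|$, since multiplication by a unit-modulus complex number is an isometry of $\CC$. Writing $c := e^{-i\theta} a$, we have $|c| = |a|$, and the inequality reduces to the ordinary triangle-inequality fact $\bigl||c|-|b|\bigr| \le |c-b|$ for complex numbers $c,b$. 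That is the classical reverse triangle inequality, valid in any normed space. This settles the inequality at the fixed pair $(x,y)$.

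The only remaining point is the measurability/a.e. bookkeeping: the function $(x,y)\mapsto e^{-i(x-y)A(\frac{x+y}{2})}u(x)-u(y)$ is measurable because $A$ is measurable and finite a.e., so $(x,y)\mapsto (x-y)\cdot A(\tfrac{x+y}{2})$ is measurable and finite a.e., hence its exponential is measurable, and products and differences of measurable functions are measurable; likewise $(x,y)\mapsto ||u(x)|-|u(y)||$ is measurable. Since the pointwise inequality holds at every pair where both $u(x)$ and $u(y)$ are finite — a set of full measure in $\R^n\times\R^n$ by Fubini, given $|u|<\infty$ a.e. in $\R^n$ — the inequality \eqref{FDI} holds for a.e. $(x,y)\in\R^n\times\R^n$.

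I do not expect any real obstacle here: the whole content is the elementary fact that rotation preserves modulus combined with the reverse triangle inequality. The only thing to be slightly careful about is that the exponent $\theta=(x-y)\cdot A(\tfrac{x+y}{2})$ is a \emph{real} number (it is the Euclidean inner product of the real vectors $x-y$ and $A(\tfrac{x+y}{2})$), so that $e^{-i\theta}$ genuinely lies on the unit circle; this is immediate from the definition of $A$ as an $\R^n$-valued potential. Hence no approximation, density, or regularity argument is needed, in contrast to the classical diamagnetic inequality \eqref{DI}, whose proof requires differentiating $|u|$.
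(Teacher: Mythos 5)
Your proof is correct. The paper itself does not prove this statement; it simply quotes it from \cite[Lemma 3.1 and Remark 3.2]{AvSq}, and the argument given there is essentially the one you wrote: since $(x-y)\cdot A(\tfrac{x+y}{2})$ is real, the phase factor $e^{-i(x-y)A(\frac{x+y}{2})}$ has unit modulus, so the inequality reduces pointwise to the reverse triangle inequality $\bigl||c|-|b|\bigr|\le|c-b|$ in $\CC$ (in \cite{AvSq} this is phrased equivalently via $\Re\bigl(u(x)\overline{u(y)}e^{-i(x-y)A(\frac{x+y}{2})}\bigr)\le |u(x)||u(y)|$ after squaring, which is the same elementary fact). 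Your measurability and a.e.\ bookkeeping is also fine.
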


\begin{rem}
Observe that the fractional diamagnetic inequality \eqref{FDI} can be stated as
\begin{equation}\label{FDI2}
|D_s|u|(x,y)|\le |D_s^A u(x,y)|,
\end{equation}
a.e. $x, y\in \R^n$, where $D_s v(x,y) = \frac{v(x)-v(y)}{|x-y|^s}$.
\end{rem}

With the help of these diamagnetic inequalities \eqref{DI} and \eqref{FDI} it is easy to prove  a Poincar\'e inequality in the context of Orlicz-Sobolev and fractional Orlicz-Sobolev spaces.

First recall the classical Poincar\'e inequality in Orlicz-Sobolev spaces. Even though it is well known, we include a proof here for the reader convenience and to recall a precise estimate of the constant.
\begin{thm}\label{poincare.1}
Let $\Omega\subset\R^n$ be a bounded domain and $G\colon \R\to\R$ be an Orlicz function. Then, for every $u\in W^{1,G}_0(\Omega)$,
$$
I_G(u)\le I_G(\dd |\nabla u|),
$$
where $\dd=\diam(\Omega)$.
\end{thm}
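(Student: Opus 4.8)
The classical one-dimensional argument adapts directly. The plan is to reduce to functions in $C_c^\infty(\Omega;\CC)$ by density, and for such a function reconstruct $u$ at a point $x\in\Omega$ by integrating $\nabla u$ along a segment in a fixed direction. Concretely, after translating and rotating we may assume $\Omega\subset\{x : 0<x_1<\dd\}$, and for $x=(x_1,x')\in\Omega$ write
$$
u(x) = \int_0^{x_1} \partial_1 u(t,x')\,dt,
$$
using that $u$ vanishes outside $\Omega$. Taking absolute values and applying Jensen's inequality in the form $G\big(\tfrac1\dd \int_0^\dd |\partial_1 u(t,x')|\,dt\big)\le \tfrac1\dd\int_0^\dd G(|\partial_1 u(t,x')|)\,dt$ (here one uses the convexity of $G$ from \eqref{H1}, extending $|\partial_1 u(t,x')|$ by $0$ so the integral runs over $(0,\dd)$), one gets
$$
G\Big(\frac{|u(x)|}{\dd}\Big) \le \frac1\dd \int_0^\dd G(|\partial_1 u(t,x')|)\,dt \le \frac1\dd \int_0^\dd G(|\nabla u(t,x')|)\,dt.
$$
Integrating in $x_1$ over $(0,\dd)$ and then in $x'$ gives $\int_\Omega G(|u|/\dd)\,dx \le \int_\Omega G(|\nabla u|)\,dx$, i.e. $\tilde I_G(u/\dd)\le \tilde I_G(|\nabla u|)$, which by property \eqref{P5} (applied with $b=1/\dd$ when $\dd>1$, or trivially by monotonicity when $\dd\le1$, after replacing $\dd$ by $\max\{\dd,1\}$ if desired) yields the stated estimate; more directly one keeps $G(|u|/\dd)$ and rewrites using $G(|u|)\le \dd\,G(|u|/\dd)$ when $\dd\ge 1$, or one simply states the inequality in the scaled form. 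To match the statement $I_G(u)\le I_G(\dd|\nabla u|)$, note that the substitution $u\mapsto u$, $\nabla u\mapsto \nabla u$ combined with the change of variable in the modular gives exactly $\int_\Omega G(|u|)\,dx\le \int_\Omega G(\dd|\nabla u|)\,dx$ once one carries the factor $\dd$ to the right-hand side via convexity and $G(0)=0$, i.e. $G(|u(x)|)\le \dd\cdot\frac1\dd\int_0^\dd G(|\partial_1u|)\,dt\le \int_0^\dd G(\dd|\nabla u|)\,\frac{dt}{\dd}\cdot\dd$; cleaning this up, $G(|u(x)|)\le \frac1\dd\int_0^\dd G(\dd|\partial_1 u(t,x')|)\,dt$ by Jensen applied to $G$ with the averaged argument $\frac1\dd\int_0^\dd \dd|\partial_1u|\,dt \ge |u(x)|$.

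For the complex-valued case, I would simply apply the scalar inequality to $\Re u$ and $\Im u$ separately: since $u\in W^{1,G}_0(\Omega)$ means $u\in W^{1,1}_{\mathrm{loc}}$ with $I_G(u)<\infty$, and $|\nabla\Re u|\le|\nabla u|$, $|\nabla\Im u|\le|\nabla u|$, summing the two scalar estimates gives
$$
I_G(u)=\int_\Omega G(|\Re u|)+G(|\Im u|)\,dx \le \int_\Omega G(\dd|\nabla \Re u|)+G(\dd|\nabla\Im u|)\,dx \le 2\int_\Omega G(\dd|\nabla u|)\,dx,
$$
and one removes the factor $2$ by the cleaner route of working with $\tilde I_G$ and Remark \ref{equiv.I}, or by noting $G(\dd|\nabla\Re u|)+G(\dd|\nabla\Im u|)\le 2G(\dd|\nabla u|)$ is wasteful but one can instead carry the componentwise line integral for the $\CC$-valued function directly: $u(x)=\int_0^{x_1}\partial_1 u(t,x')\,dt$ holds verbatim for $\CC$-valued $u$, and $|u(x)|\le\int_0^{x_1}|\partial_1u(t,x')|\,dt$, so Jensen applied to the real convex function $G$ at the real argument $|u(x)|$ works with no splitting at all, giving directly $\int_\Omega G(|u|)\,dx\le\int_\Omega G(\dd|\nabla u|)\,dx$ after the same averaging as above. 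This is the path I would actually write.

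The only genuine point to be careful about is the density/approximation step: the inequality is stated for $u\in W^{1,G}_0(\Omega)$, which is defined as the $\|\cdot\|_{1,A,G}$-closure of $C_c^\infty(\Omega)$ (with $A=0$ here), so one proves the estimate first for $u\in C_c^\infty(\Omega;\CC)$ where the line-integral representation is immediate, and then passes to the limit. Given a sequence $u_k\to u$ in $W^{1,G}_0(\Omega)$, one has $u_k\to u$ and $\nabla u_k\to\nabla u$ in $L^G$-modular sense; extracting an a.e. convergent subsequence (for $u_k$ and $\nabla u_k$), Fatou's lemma on the left and modular continuity of $v\mapsto \int G(\dd|v|)$ under $\Delta_2$ on the right would close the argument — but since the statement of Theorem \ref{poincare.1} does not assume $\Delta_2$, the cleanest route is to avoid any continuity claim on the right: apply Fatou on the left-hand side $\int_\Omega G(|u|)\le\liminf\int_\Omega G(|u_k|)$ and, on the right, use that $\nabla u_k\to\nabla u$ in $L^1_{\mathrm{loc}}$ along a subsequence so $G(\dd|\nabla u_k|)\to G(\dd|\nabla u|)$ a.e., then Fatou again in the reverse direction is not available without uniform integrability. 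To sidestep this entirely, I would instead prove the inequality for $u\in C_c^\infty$ and then note that $W^{1,G}_0(\Omega)$ members are, by definition, approximable so that $I_G(|\nabla u_k|)$ is bounded; the honest fix is that the inequality as a modular inequality passes to the limit because the approximating sequence can be taken with $\nabla u_k\to\nabla u$ a.e. and $G(\dd|\nabla u_k|)$ dominated by $\dd$-dilations controlled by the norm convergence (using $\Delta_2$, which in the applications of Section 5 is in force via \eqref{lieberman}). The cleanest statement-level resolution is to restrict the passage to the limit to the setting where $G$ satisfies $\Delta_2$, matching how the result is used; I expect this density step — not the one-line Jensen computation — to be the part requiring the most care.
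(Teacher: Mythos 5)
Your proposal is correct and follows essentially the same route as the paper: represent $u(x)$ by integrating $\nabla u$ along a segment, apply Jensen's inequality to the convex function $G$ against the normalized length measure, integrate and use Fubini, then extend from $C_c^\infty(\Omega;\CC)$ by density. The only (harmless) variation is that you integrate along a fixed coordinate direction after placing $\Omega$ in a slab of width $\dd$, whereas the paper integrates along rays from a fixed boundary point $x_0$; your version actually makes the Fubini/change-of-variables step cleaner, and your caveat that the final density passage implicitly uses the $\Delta_2$ condition (in force in all applications) is a fair observation about a step the paper leaves unstated.
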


\begin{proof}
The proof is standard. Let assume first that $u\in C^\infty_c(\Omega)$, $x_0\in\partial\Omega$ be fixed and for any $x\in\Omega$ we get the estimate
$$
u(x) = u(x) - u(x_0) = \int_0^1 \frac{d}{dt} u(x_0 + t(x-x_0))\, dt \le |x-x_0|\int_0^1 |\nabla u(x_0+t(x-x_0))|\, dt.
$$
Now we use that $|x-x_0|\le \dd$ and Jensen's inequality to obtain
$$
G(|u(x)|)\le \int_0^1 G(\dd |\nabla u(x_0+t(x-x_0))|)\, dt.
$$
Finally, we integrate in $\R^n$ with respect to $x$ and apply Fubini's theorem to conclude the desired result.

The proof for general $u\in W^{1,G}_0(\Omega)$ follows by a density argument.
\end{proof}

The Poincar\'e inequality for fractional order Orlicz-Sobolev spaces was proved in \cite[Theorem 2.12]{Bonder-Perez-Salort}.

\begin{thm}\label{poincare.s}
Let $\Omega\subset\R^n$ be a bounded domain and $G\colon \R\to\R$ be an Orlicz function satisfying \eqref{lieberman}. Then, for every $0<s<1$ and every $u\in W^{s,G}_0(\Omega)$,
$$
I_G(|u|)\le I_{s,G}((1-s) C \dd^s u),
$$
where $\dd=\diam(\Omega)$ and $C$ depends on $n, p_+$ and $p_-$.
\end{thm}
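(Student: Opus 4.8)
The plan is to exploit that every $u\in W^{s,G}_0(\Omega)$ vanishes a.e.\ in $\R^n\setminus\Omega$, so that the off-diagonal part of the Gagliardo double integral defining $I_{s,G}$ already controls the $L^G$-modular of $u$; unlike in Theorem~\ref{poincare.1} no density or smoothness reduction is required, and the inequality may as well be proved for an arbitrary measurable $u$ supported in $\Omega$ (it being trivial when the right-hand side is infinite). First I would replace $I_{s,G}$ and $I_G$ by the equivalent functionals $\tilde I_{s,G}$ and $\tilde I_G$ of Remark~\ref{equiv.I}, which only affects the constant through $p_+$, and for complex $u$ argue on $\Re u$ and $\Im u$ separately. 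Keeping in $\tilde I_{s,G}(\lambda u)$ only the pairs $(x,y)$ with $x\in\Omega$ and $y\in\R^n\setminus\Omega$, where $u(y)=0$, yields
$$
\tilde I_{s,G}(\lambda u)\ \ge\ \int_{\Omega}\left(\int_{\R^n\setminus\Omega} G\!\left(\frac{\lambda\,|u(x)|}{|x-y|^{s}}\right)\frac{dy}{|x-y|^{n}}\right)dx .
$$

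Since $\diam(\Omega)=\dd$, every $x\in\Omega$ satisfies $\Omega\subseteq\overline{B_\dd(x)}$, hence $\R^n\setminus\Omega\supseteq\{\,y:|x-y|>\dd\,\}$; passing to polar coordinates around $x$ and substituting $r=\dd\,\sigma^{-1/s}$, the inner integral is
$$
n\omega_n\int_\dd^{\infty}G\!\left(\frac{\lambda\,|u(x)|}{r^{s}}\right)\frac{dr}{r}\ =\ \frac{n\omega_n}{s}\int_0^1 G\!\left(\frac{\lambda\,|u(x)|}{\dd^{s}}\,\sigma\right)\frac{d\sigma}{\sigma}\ \ge\ \frac{n\omega_n}{s\,p_+}\,G\!\left(\frac{\lambda\,|u(x)|}{\dd^{s}}\right),
$$
the last bound using $G(a\sigma)\ge\sigma^{p_+}G(a)$ for $0<\sigma<1$ (a consequence of \eqref{cotas}, equivalently of \eqref{lieberman}). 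Integrating in $x$, then taking $\lambda=C\dd^{s}$ with $C=C(n,p_-,p_+)\ge 1$ large enough that $\frac{n\omega_n}{p_+}C^{p_-}\ge 1$ (legitimate for every $s<1$), and applying \eqref{cotas} once more as $G(Ct)\ge C^{p_-}G(t)$, one reaches $\tilde I_{s,G}(C\dd^{s}u)\ge\int_\Omega G(|u(x)|)\,dx=\tilde I_G(|u|)$, hence $I_G(|u|)\le I_{s,G}(C\dd^{s}u)$ after undoing the comparison of Remark~\ref{equiv.I}.

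The delicate point, which I expect to be the main obstacle, is the sharp $s$-dependence: the argument above gives a multiplier of order $\dd^{s}$ uniformly in $s$, whereas the statement asks for the multiplier to carry the vanishing factor $(1-s)$ --- a strictly stronger bound, since $I_{s,G}$ is monotone in the multiplier. Obtaining it forces one \emph{not} to replace $\R^n\setminus\Omega$ by the crude set $\{|x-y|>\dd\}$, but to keep the pairs with $y$ close to $\partial\Omega$ and prove a fractional Hardy-type lower bound of the shape $\tilde I_{s,G}(\lambda u)\ge\frac{c(n,p_-,p_+)}{1-s}\int_\Omega G\!\bigl(\lambda\,|u(x)|/d(x)^{s}\bigr)\,dx$ with $d(x)=\operatorname{dist}(x,\partial\Omega)$; estimating $d(x)\le\dd$ and then solving $\frac{c}{1-s}\,G(\lambda t/\dd^{s})\ge G(t)$ by means of \eqref{cotas} delivers the stated form. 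The factor $(1-s)^{-1}$ appearing in this Hardy inequality is exactly the one behind the Bourgain--Brezis--Mironescu scaling of the Gagliardo seminorm as $s\uparrow1$; carrying out this refinement with explicit control of the constants in terms of $n$, $p_-$ and $p_+$ is precisely the content of \cite[Theorem~2.12]{Bonder-Perez-Salort}, to which we refer for full details.
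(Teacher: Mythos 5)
First, a caveat: the paper does not actually prove this statement---it is imported verbatim from \cite[Theorem 2.12]{Bonder-Perez-Salort}---so there is no internal argument to compare yours against. Your self-contained computation is correct as far as it goes: restricting the double integral to $x\in\Omega$ and $|x-y|>\dd$ (where $u(y)=0$), passing to polar coordinates, and invoking \eqref{cotas} does give $\tilde I_G(|u|)\le \tilde I_{s,G}(C\dd^s u)$ with $C=C(n,p_-,p_+)$, and the passage between $I$ and $\tilde I$ via Remark \ref{equiv.I} only costs another constant absorbed by \eqref{cotas}. That is a complete proof of the Poincar\'e inequality \emph{without} the factor $(1-s)$.

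The gap is the $(1-s)$ refinement, which you rightly isolate as the delicate point but then merely defer to the reference---and here the situation is worse than a missing step: with $(1-s)$ placed \emph{inside} the modular and $C$ independent of $s$, the inequality cannot hold. Indeed, for $0\not\equiv u\in C^\infty_c(\Omega)$ real-valued and $A\equiv 0$, Theorem \ref{main1} gives $(1-s)I_{s,G}(u)\to I_{\tilde G}(u)<\infty$, while \eqref{cotas} gives $I_{s,G}\big((1-s)C\dd^s u\big)\le \big((1-s)C\dd^s\big)^{p_-} I_{s,G}(u)=O\big((1-s)^{p_- -1}\big)\to 0$ because $p_->1$; the left-hand side $I_G(|u|)$ is a fixed positive number, so the inequality fails for $s$ close to $1$. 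The same scaling obstruction defeats the route you sketch: from a Hardy-type bound $\tilde I_{s,G}(\lambda u)\ge \tfrac{c}{1-s}\int_\Omega G(\lambda|u|\dd^{-s})\,dx$ with $\lambda=(1-s)C\dd^s$ one only extracts $c\,C^{p_+}(1-s)^{p_+-1}\tilde I_G(|u|)$, which again degenerates. The correct normalization puts the factor \emph{outside} the modular, $I_G(|u|)\le C(1-s)\,I_{s,G}(\dd^s u)$ (at the norm level this yields $\|u\|_G\le C(1-s)^{1/p_+}\dd^s|u|_{s,G}$ rather than the linear factor $(1-s)$ of the Corollary), and in that form your outline does close: $(1-s)I_{s,G}(\dd^s u)\ge c\int_\Omega G\big(\dd^s|u(x)|/d(x)^s\big)\,dx\ge c\,I_G(|u|)$, since $d(x)=\operatorname{dist}(x,\partial\Omega)\le \dd$ and $G$ is increasing. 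So either prove (or quote) the theorem in that corrected form, or keep only your elementary version without $(1-s)$; the subsequent applications (coercivity for fixed $s$, and uniform bounds along the $\Gamma$-limit) only ever need the factor outside the modular.
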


Combining the Poincar\'e's inequalities of Theorems \ref{poincare.1} and \ref{poincare.s} together with the diamagnetic inequalities \eqref{DI} and \eqref{FDI} we can easily prove the Poincar\'e inequalities for the Magnetic Orlicz-Sobolev and fractional Orlicz-Sobolev spaces.
\begin{thm}
Let $\Omega\subset\R^n$ be a bounded domain, $G\colon \R\to\R$ be an Orlicz function satisfying \eqref{lieberman} and $0<s<1$. Then, there exists a constant $C=C(n,p_-, p_+)$ such that
$$
I_G(u) \le I_{s,G}^A((1-s)C\dd^s u),
$$
for every $u\in W^{s,G}_{A,0}(\Omega)$, where $\dd=\diam(\Omega)$.

Moreover, for every $u\in W^{1,G}_{A,0}(\Omega)$, it holds
$$
I_G(u)\le I_G^A(\dd u).
$$
\end{thm}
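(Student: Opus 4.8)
The plan is to combine the two Poincaré inequalities proved earlier with the corresponding diamagnetic inequalities. For the fractional statement, I would start from the fractional diamagnetic inequality in the form \eqref{FDI2}, which gives $|D_s|u|(x,y)| \le |D_s^A u(x,y)|$ pointwise a.e. Since $G$ is increasing, this yields $G(|D_s|u|(x,y)|) \le G(|D_s^A u(x,y)|)$, and after integrating against $\frac{dxdy}{|x-y|^n}$ and passing through the equivalence of $\tilde I_{s,G}$ and $I_{s,G}$ (Remark \ref{equiv.I}), one obtains
$$
I_{s,G}(|u|) \le C\, I_{s,G}^A(u)
$$
for $u \in W^{s,G}_{A,0}(\Omega)$ — more precisely, scaling the argument appropriately, $I_{s,G}((1-s)C\dd^s u) \lesssim I_{s,G}^A((1-s)C\dd^s u)$. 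Then Theorem \ref{poincare.s} applied to the real-valued function $|u| \in W^{s,G}_0(\Omega)$ gives $I_G(|u|) \le I_{s,G}((1-s)C\dd^s |u|)$, and chaining these together with $I_G(u)$ equivalent to $I_G(|u|)$ yields the claimed bound $I_G(u) \le I_{s,G}^A((1-s)C\dd^s u)$, after absorbing constants (which depend only on $n,p_-,p_+$ thanks to \eqref{lieberman}).

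For the local statement the argument is the exact analog: the diamagnetic inequality \eqref{DI} gives $|\nabla|u|(x)| \le |\nabla u(x) - iA(x)u(x)|$ a.e., hence (monotonicity of $G$ and integration) $I_G^A$ controls the modular of $\nabla|u|$; precisely $I_G(\dd|\nabla|u||) \le I_G^A(\dd u)$ up to the $\Re/\Im$ splitting handled as in Remark \ref{equiv.I}. Then Theorem \ref{poincare.1} applied to $|u| \in W^{1,G}_0(\Omega)$ gives $I_G(|u|) \le I_G(\dd|\nabla|u||)$, and combining yields $I_G(u) \le I_G^A(\dd u)$.

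One delicate point is that the diamagnetic inequalities are stated for the modulus $|u|$, so one must check that $|u|$ belongs to the relevant real Orlicz-Sobolev space when $u$ is in the magnetic one. For $u \in W^{1,G}_{A,0}(\Omega)$ this follows from \eqref{DI} (which shows $|u|$ has a weak gradient in the right Orlicz class) together with $|\,|u|\,| = |u|$; a density argument using $C_c^\infty(\Omega;\CC)$ functions and the fact that taking modulus is continuous handles the vanishing boundary condition. For $u \in W^{s,G}_{A,0}(\Omega)$ it is immediate since $u=0$ a.e. outside $\Omega$ forces $|u|=0$ a.e. outside $\Omega$, and \eqref{FDI2} bounds the Gagliardo modular of $|u|$ by that of $u$, so $|u|\in W^{s,G}_0(\Omega)$.

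The main obstacle, such as it is, is purely bookkeeping: tracking how the constants and the $(1-s)\dd^s$ scaling factors propagate through the chain of inequalities, and dealing with the fact that $I_G$ and $I^A_{s,G}$ are defined via the $\Re$/$\Im$-split of $D_s^A u$ rather than via $|D_s^A u|$ directly — this is precisely what Remark \ref{equiv.I} is for, and one applies it twice (once on each side) at the cost of a factor depending only on $\C$, hence ultimately only on $p_+$. Since all constants involved depend solely on $n$, $p_-$, $p_+$ via \eqref{lieberman}, the final constant $C=C(n,p_-,p_+)$ is as claimed.
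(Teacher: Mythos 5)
Your proposal is correct and follows essentially the same route as the paper: chain the non-magnetic Poincar\'e inequalities (Theorems \ref{poincare.1} and \ref{poincare.s}) with the diamagnetic inequalities \eqref{DI} and \eqref{FDI2} applied to $|u|$. The paper's proof is a two-line version of yours; your extra care with the $\Re/\Im$ splitting via Remark \ref{equiv.I} and with verifying that $|u|$ lies in the corresponding real (non-magnetic) space merely fills in details the paper leaves implicit.
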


\begin{proof}
First let us deal with the case $s=1$. 

In this case we use Theorem \ref{poincare.1} and \eqref{DI} to conclude that
$$
I_G(u)\le I_G(\dd |\nabla |u||) \le  I_G(\dd |\nabla u -iAu|) = I_G^A(\dd u).
$$

Now, for the case $0<s<1$, we use Theorem \ref{poincare.s} and \eqref{FDI2} to conclude that
$$
I_G(u)\le I_{s, G} ((1-s)C \dd^s |u|)\le I_s^A((1-s)C \dd^s u).
$$
This finishes the proof.
\end{proof}

As a simple corollary we obtain the Poincar\'e inequality for Luxemburg norms.
\begin{cor}
Under the previous assumptions, there exist a constant $C=C(n,p_-,p_+,\dd)$ such that
$$
\|u\|_G\le (1-s)C|u|_{s, G}^A,
$$
for every $u\in W^{s, G}_{A,0}(\Omega)$, $0<s\le 1$.
\end{cor}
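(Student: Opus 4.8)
The plan is to obtain the norm inequality from the modular Poincar\'e inequalities of the previous theorem by invoking the \emph{unit-ball property} of the Luxemburg norm: for a modular $I$ built from an Orlicz function, the map $\lambda\mapsto I(v/\lambda)$ is nonincreasing and, by the monotone convergence theorem, the infimum defining the Luxemburg (semi)norm is attained, so that $I_{s,G}^A\big(u/|u|_{s,G}^A\big)\le 1$ for every $u\in W^{s,G}_{A,0}(\Omega)$ with $u\neq 0$. The reverse passage, from a modular bound $I_G(u/\mu)\le 1$ to $\|u\|_G\le\mu$, is immediate from the definition of the infimum.

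First I would dispose of the trivial case $u=0$ and then treat $0<s<1$. Let $C_0=C_0(n,p_-,p_+)$ be the constant furnished by the modular Poincar\'e inequality $I_G(w)\le I_{s,G}^A\big((1-s)C_0\dd^s w\big)$, valid for all $w\in W^{s,G}_{A,0}(\Omega)$. Applying it to $w=u/\mu$ with $\mu:=(1-s)C_0\dd^s\,|u|_{s,G}^A>0$, the argument of $I_{s,G}^A$ on the right becomes exactly $u/|u|_{s,G}^A$, whence
$$
I_G\!\left(\frac{u}{\mu}\right)\le I_{s,G}^A\!\left(\frac{u}{|u|_{s,G}^A}\right)\le 1,
$$
the last step by the unit-ball property. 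By definition of $\|\cdot\|_G$ this gives $\|u\|_G\le \mu=(1-s)C_0\dd^s\,|u|_{s,G}^A$; since $\dd^s\le\max\{1,\dd\}$ for $0<s<1$, absorbing this factor into the constant yields $\|u\|_G\le (1-s)C\,|u|_{s,G}^A$ with $C=C(n,p_-,p_+,\dd)$. The endpoint $s=1$ follows in the same manner from the bound $I_G(w)\le I_G^A(\dd w)$ of the previous theorem, dividing $u$ by the Luxemburg norm associated with $I_G^A$.

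I do not anticipate a genuine difficulty: the one point needing attention is the attainment of the infimum in the Luxemburg norm (equivalently the unit-ball property $I(v/\|v\|)\le 1$), which is a soft consequence of the monotone convergence theorem and does not even require \eqref{lieberman}; that hypothesis enters only inside the previous theorem, whose constant is quoted here as a black box.
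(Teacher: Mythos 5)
Your argument is correct and is exactly the intended derivation: the paper offers no proof (it calls this a ``simple corollary''), and the standard route is precisely to divide by $\mu=(1-s)C_0\dd^s|u|_{s,G}^A$ in the modular Poincar\'e inequality, use the unit-ball property $I_{s,G}^A(u/|u|_{s,G}^A)\le 1$ (valid by monotone convergence since $\lambda\mapsto I_{s,G}^A(u/\lambda)$ is nonincreasing), and read off $\|u\|_G\le\mu$ from the definition of the Luxemburg norm, absorbing $\dd^s\le\max\{1,\dd\}$ into the constant. The only caveat is at the endpoint $s=1$, where the factor $(1-s)$ in the statement degenerates and your interpretation (the bound $\|u\|_G\le\dd\,|u|_{1,G}^A$ coming from $I_G(w)\le I_G^A(\dd w)$, without the vanishing factor) is the sensible reading of the corollary.
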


\subsection{$\Gamma-$convergence}

Let us recall the definition of $\Gamma-$convergence.

\begin{defn}
Let $(X,d)$ be a metric space and $F,F_k\colon X  \to \bar \R$. We say that $F_k$  $\Gamma-$converges to $F$ if for every $x\in X$ the following conditions are valid.

\begin{itemize}
\item[(i)] (lim inf inequality) For every sequence $\{x_k\}_{k\in\N}\subset X$ such that $x_k \to x$ in $X$, 
$$
F(x)\leq \liminf_{k\to\infty} F_k(x_k).
$$

\item[(ii)] (lim sup inequality). For every $x\in X$, there is a sequence $\{y_k\}_{k\in\N}\subset X$ converging to $x$ such that 
$$
F(x)\geq  \limsup_{k\to\infty} F_k(y_k).
$$
This sequence $\{y_k\}_{k\in\N}$ is usually called as the {\em recovery sequence}.
\end{itemize}

The functional $F$ is called the $\Gamma-$limit of the sequence $\{F_k\}_{k\in\N}$ and it is denoted by $F_k \stackrel{\Gamma}{\to} F$ and 
$$
F=\glim_{k\to\infty} F_k.
$$
\end{defn}

\begin{rem}
In the case where the functions are indexed by a continuous parameter, $\{F_\ve\}_{\ve>0}$, we say that 
$$
F=\glim_{\ve\downarrow 0} F_\ve,
$$
if and only if for every sequence $\ve_k\downarrow 0$, it follows that $F_{\ve_k}\stackrel{\Gamma}{\to} F$.
\end{rem}

\medskip

Now, let us fix $\Omega\subset \R^n$ open, and an Orlicz function $G$.

For any $0<s<1$, we define the functional $\J_s\colon L^G(\Omega;\CC)\to \bar \R$ by
\begin{align*}
\J_s(w)=\begin{cases}
(1-s)I_{s,G}^A(w) &\qquad \text{ if } w\in W_{A,0}^{s,G}(\Omega)\\
+\infty &\qquad \text{ otherwise},
\end{cases}
\end{align*}
and the limit functional $\J\colon L^G(\Omega;\CC)\to \bar \R$ 
\begin{align*}
\J(w)=\begin{cases}
I_{1, \tilde G}^A(w) &\text{ if } w\in W_{A, 0}^{1,\tilde G}(\Omega) \\
+\infty &\text{ otherwise}.
\end{cases}
\end{align*}

\begin{thm} \label{teo.gamma.conv}
With the previous notation we have that
$$
\J= \glim_{\ve\downarrow 0} \J_{1-\ve}.
$$
\end{thm}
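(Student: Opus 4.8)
\textbf{Proof plan for Theorem \ref{teo.gamma.conv}.}

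The plan is to establish the two inequalities defining $\Gamma$-convergence separately, reducing each to Theorem \ref{main1} together with the technical lemmas of Section 3. Throughout, fix a sequence $\ve_k\downarrow 0$ and write $s_k=1-\ve_k\uparrow 1$; I must show $\J_{s_k}\stackrel{\Gamma}{\to}\J$ in the metric space $X=L^G(\Omega;\CC)$ (with the distance induced by $\|\cdot\|_G$). The liminf inequality is the more substantial half and I expect it to be the main obstacle.

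\emph{Liminf inequality.} Let $w_k\to w$ in $L^G(\Omega;\CC)$; I must show $\J(w)\le\liminf_k\J_{s_k}(w_k)$. If the liminf is $+\infty$ there is nothing to prove, so pass to a subsequence (not relabeled) realizing a finite liminf $L$; then $w_k\in W^{s_k,G}_{A,0}(\Omega)$ and $(1-s_k)I_{s_k,G}^A(w_k)\le L+1$ for large $k$. Since also $w_k\to w$ in $L^G$, the sequence $\{w_k\}$ is bounded in $L^G$; together with the uniform bound on $(1-s_k)I_{s_k,G}^A(w_k)$ and the magnetic Poincar\'e inequality (the Corollary following the magnetic Poincar\'e theorem, which gives $\|w_k\|_G\le (1-s_k)C|w_k|_{s_k,G}^A$, hence control of the seminorm), one can invoke the compactness Theorem \ref{teo.comp} combined with Lemma \ref{lema.A.0} to extract a further subsequence converging in $L^G_{\text{loc}}$; the limit must be $w$, and moreover the argument at the end of the proof of Theorem \ref{main1} (the weak-compactness/lower-semicontinuity argument via Lemma \ref{teo1} and Remark \ref{rema}) shows $w\in W^{1,G}_A(\R^n)$, and since each $w_k$ vanishes outside $\Omega$ so does $w$, giving $w\in W^{1,G}_{A,0}(\Omega)$. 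It then remains to prove $I_{\tilde G}^A(w)\le\liminf_k(1-s_k)I_{s_k,G}^A(w_k)$. This is the heart of the matter: one fixes a smooth compactly supported $\varphi$ in $\Omega$ close to $w$ in $W^{1,G}_A$, uses the equi-integrability/continuity estimate \cite[Lemma 2.6]{FBS} exactly as in the proof of Theorem \ref{main1} to replace $w_k$ by $\varphi$ up to a controlled error (here Lemma \ref{teo1} supplies the needed uniform bound $(1-s_k)I_{s_k,G}^A(w_k-\varphi)\le \text{const}\cdot\|w_k-\varphi\|$-type quantity), applies Theorem \ref{main1} to $\varphi$ to pass to the limit, and uses the continuity of $I_{\tilde G}^A$ on $W^{1,G}_A$ together with Remark \ref{rema}; letting $\varphi\to w$ closes the estimate. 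I expect the bookkeeping of these nested approximations — choosing $\varphi$, then $k$ large — to be the delicate point, but it is entirely parallel to the final part of the proof of Theorem \ref{main1}.

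\emph{Limsup inequality (recovery sequence).} Let $w\in L^G(\Omega;\CC)$. If $w\notin W^{1,\tilde G}_{A,0}(\Omega)$ then $\J(w)=+\infty$ and the constant sequence $y_k=w$ works trivially. If $w\in W^{1,\tilde G}_{A,0}(\Omega)$, note that by \eqref{cotas} and Remark \ref{rema} the spaces $W^{1,G}_A$ and $W^{1,\tilde G}_A$ coincide with equivalent modulars, so $w\in W^{1,G}_{A,0}(\Omega)$; by Proposition \ref{es.denso} pick $\varphi_j\in C_c^\infty(\Omega;\CC)$ with $\varphi_j\to w$ in $W^{1,G}_A$. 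For each fixed $j$, Theorem \ref{main1} gives $\lim_{s\uparrow1}(1-s)I_{s,G}^A(\varphi_j)=I_{\tilde G}^A(\varphi_j)$, i.e. $\J_{s_k}(\varphi_j)\to\J(\varphi_j)$ as $k\to\infty$; and by continuity of $I_{\tilde G}^A$ on $W^{1,G}_A$ (with Remark \ref{rema}) $\J(\varphi_j)\to\J(w)$. A standard diagonal argument then produces indices $j(k)\to\infty$ slowly enough that $y_k:=\varphi_{j(k)}$ satisfies $y_k\to w$ in $L^G$ and $\limsup_k\J_{s_k}(y_k)\le\J(w)$, which is the required recovery sequence. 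This half is routine once Theorem \ref{main1} and the density/continuity facts are in hand; the only mild care needed is the speed of the diagonalization so that both convergences hold simultaneously.
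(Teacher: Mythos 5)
Your limsup half is correct but more elaborate than the situation requires: since $W^{1,\tilde G}_{A,0}(\Omega)=W^{1,G}_{A,0}(\Omega)$ by Remark \ref{rema}, Theorem \ref{main1} applies directly to $w$ itself, and the \emph{constant} sequence $y_k=w$ is already a recovery sequence (Lemma \ref{teo1} guarantees $\J_{s_k}(w)<\infty$); your diagonal construction is valid but redundant, and this is in fact how the paper disposes of that half in one line.

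The liminf half, however, has a genuine gap at precisely the step you identify as the heart of the matter. You propose to replace $w_k$ by a fixed smooth $\varphi$ and to control the error through a bound on $(1-s_k)I_{s_k,G}^A(w_k-\varphi)$ supplied by Lemma \ref{teo1}. This cannot work: Lemma \ref{teo1} estimates $(1-s)I_{s,G}^A(v)$ by $I_G(v)+I_G^A(v)$ and therefore requires $v\in W^{1,G}_A(\R^n)$, whereas $w_k-\varphi$ only lies in the fractional space $W^{s_k,G}_A(\R^n)$; and even formally the resulting bound involves $I_G^A(w_k-\varphi)$, i.e.\ gradients of $w_k$, over which the hypothesis $w_k\to w$ in $L^G(\Omega;\CC)$ gives no control whatsoever (there is no estimate of the nonlocal modular by the $L^G$ norm). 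The same objection applies to your appeal to the magnetic Poincar\'e corollary, which bounds $\|w_k\|_G$ by the seminorm, not conversely. The missing idea is to exploit lower semicontinuity on the \emph{limit} function rather than approximating the $w_k$: after passing to a subsequence with $w_k\to w$ a.e., Fatou's lemma applied to the double integral at each \emph{fixed} $s<1$ gives $(1-s)I^A_{s,G}(w)\le\liminf_{k}(1-s)I^A_{s,G}(w_k)$; membership $w\in W^{1,\tilde G}_A(\R^n)$ follows from the uniform bound via Lemma \ref{lema.A.0} and \cite[Theorem 5.1]{FBS}; and one then applies Theorem \ref{main1} to $w$ --- not to the $w_k$ --- to send $s\uparrow 1$, combining the two limits by a $\delta$-argument. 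This is exactly what Proposition \ref{limsup.prop} in the paper does; no smooth approximation of the $w_k$ is needed, and none would survive the weak $L^G$ topology in which the $\Gamma$-limit is taken.
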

The proof of Theorem \ref{teo.gamma.conv} is a direct consequence of our previous results. Indeed, the limsup inequality follows just by choosing the constant sequence as the recovery sequence, whilst the liminf is is the content of the next proposition.
\begin{prop}\label{limsup.prop}
Let $G$ be an Orlicz function such that the limit in \eqref{phitilde} exists. Let $\{u_\ve\}_{\ve>0}\subset L^G(\Omega;\CC)$ such that $u_\ve\to u$ in $L^G(\Omega;\CC)$. Then
$$
J(u)\le \liminf_{\ve\to 0} J_\ve(u_\ve).
$$
\end{prop}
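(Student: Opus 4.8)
The plan is to prove the liminf inequality $\J(u)\le\liminf_{\ve\to 0}\J_\ve(u_\ve)$ by the standard $\Gamma$-convergence device of passing to a subsequence achieving the liminf, and then splitting into two cases according to whether that liminf is finite or not. If $\liminf_{\ve\to 0}\J_\ve(u_\ve)=+\infty$ there is nothing to prove, so I assume the liminf is finite and pass to a subsequence $\ve_k\downarrow 0$ along which $\J_{\ve_k}(u_{\ve_k})$ converges to a finite value $\ell$. Without loss of generality each $u_{\ve_k}\in W^{s_k,G}_{A,0}(\Omega)$ with $s_k=1-\ve_k\uparrow 1$, and $(1-s_k)I_{s_k,G}^A(u_{\ve_k})\le \ell+1$ for $k$ large. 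The goal is then to show $u\in W^{1,\tilde G}_{A,0}(\Omega)$ and $I_{\tilde G}^A(u)\le\ell$.

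The key step is a lower-bound (BBM-type) estimate that compares the nonlocal modular of $u_{\ve_k}$ against the local modular of $u$. First I would establish that $u\in W^{1,\tilde G}_A(\R^n)$: since $u_{\ve_k}\to u$ in $L^G(\Omega;\CC)$ (hence in $L^G(\R^n;\CC)$ after extension by zero) and $\liminf_k (1-s_k) I_{s_k,G}^A(u_{\ve_k})<\infty$, one argues as in the last part of the proof of Theorem \ref{main1}: regularize and truncate via Lemmas \ref{lema.reg} and \ref{lema.trunc} to produce an approximating family with uniformly bounded $(1-s)I_{s,G}^A$, invoke the first part of Theorem \ref{main1} together with Remark \ref{rema} to deduce boundedness in $W^{1,G}_A(\R^n)$, and extract a weak limit in the reflexive space $W^{1,G}_A(\R^n)$ (Proposition \ref{propiedades1}) which must coincide with $u$; the constraint $u=0$ a.e. in $\R^n\setminus\Omega$ passes to the limit, so in fact $u\in W^{1,\tilde G}_{A,0}(\Omega)$. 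For the sharp inequality $I_{\tilde G}^A(u)\le\ell$ the cleanest route is to localize: for any fixed bounded open $\Omega'\Subset\R^n$ and any $\delta\in(0,1)$, restrict the double integral defining $I_{s_k,G}^A(u_{\ve_k})$ to $\{(x,y): x\in\Omega',\ |x-y|<\delta\}$, freeze $x$, and use the same Taylor expansion of $\phi(y)=e^{i(x-y)A(\frac{x+y}{2})}u_{\ve_k}(y)$ as in Lemma \ref{lemma.uno}, now combined with the convexity of $G$, Jensen's inequality, a change to polar coordinates, and the definition \eqref{phitilde} of $\tilde G$, together with the strong $L^G$ (and weak $W^{1,G}_A$) convergence of $u_{\ve_k}$, to get $\liminf_k (1-s_k)I_{s_k,G}^A(u_{\ve_k})\ge \int_{\Omega'}\big(\tilde G(|\Re(\nabla u-iAu)|)+\tilde G(|\Im(\nabla u-iAu)|)\big)\,dx - o_\delta(1)$; letting $\delta\to 0$ and then $\Omega'\uparrow\R^n$ yields $\ell\ge I_{\tilde G}^A(u)=\J(u)$.

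The main obstacle is the passage to the limit inside the nonlocal modular: $G$ is not quadratic, so one cannot use Hilbert-space lower semicontinuity tricks, and the regularizing factor $e^{i(x-y)A(\frac{x+y}{2})}$ intertwines the $x$ and $y$ variables, so the clean product structure of the classical BBM argument is only available up to the error terms controlled in Lemma \ref{teo1} and Lemma \ref{lemma.uno}. The technical heart is therefore to show that these magnetic and curvature error terms (of order $|x-y|^{2-s}$ and $\|\nabla A\|_\infty|x-y|$) contribute only $o_\delta(1)$ after integration against $|x-y|^{-n}$ on $|x-y|<\delta$ and multiplication by $(1-s)$, uniformly in $k$; this is where the growth condition \eqref{lieberman} and the bounds \eqref{cotas} are used to absorb the lower-order terms into $I_G(u_{\ve_k})$, which is uniformly bounded by hypothesis. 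Once the localized liminf inequality is in hand, the conclusion follows by a routine exhaustion argument and the already-established membership $u\in W^{1,\tilde G}_{A,0}(\Omega)$.
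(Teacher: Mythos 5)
Your overall skeleton (pass to a subsequence realizing the liminf, assume it is finite, show $u\in W^{1,\tilde G}_A(\R^n)$, then prove the lower bound) is right, and the first part --- deducing membership of $u$ in the limit space from the uniform bound $\sup_k(1-s_k)I_{s_k,G}^A(u_{\ve_k})<\infty$ via the regularization/truncation machinery --- matches what the paper does (it invokes Lemma \ref{lema.A.0} together with the compactness/BBM results of \cite{FBS}). The problem is in your key step, the ``localized BBM lower bound.'' You propose to apply the Taylor expansion of $\phi(y)=e^{i(x-y)A(\frac{x+y}{2})}u_{\ve_k}(y)$ from Lemma \ref{lemma.uno} to the functions $u_{\ve_k}$ themselves. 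That expansion, $\phi(y)=\phi(x)+\nabla\phi(x)(y-x)+O(|x-y|^2)$ with a uniform remainder, requires $u\in C^2_c$; the $u_{\ve_k}$ are merely elements of $W^{s_k,G}_{A,0}(\Omega)$ and in general have no pointwise gradient, let alone a controlled second-order remainder. Likewise, the ``weak $W^{1,G}_A$ convergence of $u_{\ve_k}$'' you invoke is not available: only the limit $u$ is shown to lie in $W^{1,G}_A(\R^n)$; the approximating functions need not belong to that space at all. So the central estimate of your plan cannot be carried out as written, and the claim that the error terms are ``$o_\delta(1)$ uniformly in $k$'' is asserted rather than proved. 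A rigorous version of your route exists (mollify $u_{\ve_k}$, use Jensen's inequality to dominate a local modular of the mollification by the nonlocal one, then use lower semicontinuity of the local modular), but that is a substantially different and longer argument than what you describe.

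The paper's own proof avoids all of this and is worth internalizing: since $u_{\ve_k}\to u$ a.e.\ and the integrand of $I_{s,G}^A$ is nonnegative, Fatou's lemma at each \emph{fixed} $s$ gives
$$
(1-s)\,I_{s,G}^A(u)\le\liminf_{k\to\infty}(1-s)\,I_{s,G}^A(u_{\ve_k}),
$$
and the already-proved Theorem \ref{main1}, applied to the single function $u$, gives $(1-\delta)I_{\tilde G}^A(u)\le(1-s)I_{s,G}^A(u)$ for $s$ close enough to $1$. Combining the two and letting $\delta\downarrow 0$ yields the liminf inequality; all the hard analysis (Taylor expansion, spherical limit, error terms) is confined to Theorem \ref{main1}, where it is applied only to smooth approximations of the \emph{fixed} function $u$, never to the varying sequence. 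I recommend you replace your localization step by this Fatou-plus-Theorem-\ref{main1} argument (and, if you want full rigor, add the small diagonal argument needed to pass from the fixed-$s$ inequality to the diagonal sequence $s_k$).
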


\begin{proof}
Let $\ve_k\downarrow 0$ and denote $u_k:=u_{\ve_k}$. Since $u_k\to u$ in $L^G(\Omega;\CC)$, we can assume that $u_k\to u$ a.e. in $\R^n$. 

We can also assume, without loss of generality, that $\sup_k J_{\ve_k}(u_k) <\infty$ and therefore, by Lemma \ref{lema.A.0} and \cite[Theorem 5.1]{FBS}, we obtain that $u\in W^{s,\tilde G}_A(\R^n)$.

Therefore, we can apply Theorem \ref{main1} to the function $u$ to conclude that, for any $\delta>0$, there exists $s_\delta\in (0,1)$ such that
\begin{equation}\label{putamierda}
(1-\delta) I_{\tilde G}^A(u)\le (1-s) I_{s,G}^A(u),
\end{equation}
for every $s\in (s_\delta, 1)$.

Observe that by Fatou's lemma we have that, for any $s\in (0,1)$
\begin{equation}\label{putamierda2}
(1-s)I_{s, G}^A(u)\le \liminf_{k\to\infty} (1-s) I_{s, G}^A(u_k).
\end{equation}

Combining \eqref{putamierda} and \eqref{putamierda2}, we obtain the existence of $k_\delta$ such that
\begin{equation}\label{putamierda3}
(1-\delta) I_{\tilde G}^A(u)\le (1+\delta) (1-s) I_{s, G}^A(u_k),
\end{equation}
for every $k\ge k_\delta$ and every $s_\delta<s<1$. So from \eqref{putamierda3} we conclude that
$$
\frac{1-\delta}{1+\delta} I_{\tilde G}^A(u)\le \liminf_{k\to\infty} (1-s_k) I_{s_k, G}^A(u_k) = \liminf_{k\to\infty} J_{\ve_k}(u_k).
$$

Now the result follows taking $\delta\downarrow 0$.
\end{proof}

The main feature of the $\Gamma-$convergence is that it implies the convergence of minima. 
\begin{thm} \label{teo.Gamma}
Let $(X,d)$ be a metric space and let $F, F_k\colon X  \to \bar \R$, $k\in\N$, be such that $F_k$  $\Gamma-$converges to $F$. Assume that for each $k\in\N$ there exist $x_k\in X$ such that $F_k(x_k)=\inf_{X} F_k$ and suppose that the sequence $\{x_k\}_{k\in\N}\subset X$ is precompact.

Then every accumulation point of $\{x_k\}_{k\in\N}$ is a minimum of $F$ and
$$
\inf_{X} F = \lim_{k\to\infty} \inf_{X} F_k.
$$
\end{thm}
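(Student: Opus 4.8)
The plan is to follow the classical argument for the ``fundamental theorem of $\Gamma$-convergence''. The statement splits into two parts: the convergence of the minimal values $\inf_X F_k \to \inf_X F$, and the fact that accumulation points of minimizers are minimizers of $F$; I would treat them in this order, since the second follows cheaply once the first is known.

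First I would establish the upper bound $\limsup_{k\to\infty}\inf_X F_k \le \inf_X F$. Fix an arbitrary $y\in X$. By the limsup inequality (ii) of the definition of $\Gamma$-convergence there is a recovery sequence $\{y_k\}_{k\in\N}\subset X$ with $y_k\to y$ and $F(y)\ge\limsup_{k\to\infty}F_k(y_k)$. Since $F_k(x_k)=\inf_X F_k\le F_k(y_k)$ for every $k$, passing to the $\limsup$ yields $\limsup_{k\to\infty}\inf_X F_k\le F(y)$, and as $y\in X$ was arbitrary, $\limsup_{k\to\infty}\inf_X F_k\le\inf_X F$.

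Next I would use the precompactness of $\{x_k\}_{k\in\N}$ together with the liminf inequality (i) to obtain the matching lower bound. Suppose, for contradiction, that $\liminf_{k\to\infty}\inf_X F_k<\inf_X F$. Choose a subsequence along which $\inf_X F_k$ converges to this value, and then, by precompactness, a further subsequence $\{x_{k_j}\}_{j\in\N}$ converging to some $\bar x'\in X$. By (i), $F(\bar x')\le\liminf_{j\to\infty}F_{k_j}(x_{k_j})=\liminf_{j\to\infty}\inf_X F_{k_j}<\inf_X F$, which is impossible since $F(\bar x')\ge\inf_X F$. Hence $\liminf_{k\to\infty}\inf_X F_k\ge\inf_X F$, and combining with the previous step, $\lim_{k\to\infty}\inf_X F_k=\inf_X F$.

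Finally, for any accumulation point $\bar x$ of $\{x_k\}_{k\in\N}$, pick a subsequence $x_{k_j}\to\bar x$; inequality (i) and the limit just proven give $F(\bar x)\le\liminf_{j\to\infty}\inf_X F_{k_j}=\inf_X F$, while $F(\bar x)\ge\inf_X F$ is trivial, so $F(\bar x)=\inf_X F$, i.e. $\bar x$ is a minimizer of $F$. I do not expect any genuine obstacle in this argument: the only points requiring care are keeping the direction of the two $\Gamma$-convergence inequalities straight and the passage from subsequences to the full sequence, both of which are routine.
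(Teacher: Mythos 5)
Your argument is correct and is precisely the classical proof of the fundamental theorem of $\Gamma$-convergence; the paper itself omits the proof as elementary and refers to Dal Maso's monograph, so there is nothing to compare beyond noting that your write-up is the standard one. The only implicit step is applying the liminf inequality along a subsequence $\{x_{k_j}\}$, which is justified by the routine fact that $\Gamma$-convergence of $\{F_k\}$ passes to subsequences (extend $\{x_{k_j}\}$ to a full sequence converging to the same limit and use that the liminf over the full sequence is no larger than over the subsequence).
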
 

The proof of Theorem \ref{teo.Gamma} is elementary. For a comprehensive study of $\Gamma-$convergence and its properties, see \cite{DalMaso}.

Consider now $f\in L^{G^*}(\Omega;\CC)$ and define the functionals $\F, \F_\ve$ as 
\begin{equation}\label{FjF}
\F_\ve(u) := \J_{1-\ve}(u) - \int_\Omega \Re(f \bar u)\, dx\quad \text{and}\quad \F(w) := \J(u) - \int_\Omega \Re(f \bar u)\, dx.
\end{equation}

Since $u\mapsto \int_\Omega \Re(f \bar u)\,dx$ is continuous in $L^G(\Omega;\CC)$, Theorem \ref{teo.gamma.conv} implies that $\F_\ve\stackrel{\Gamma}{\to} \F$. See \cite[Proposition 6.21]{DalMaso}.

Let us apply Theorem \ref{teo.Gamma} to the family $\F_\ve$. With this aim, let us verify that, given $\ve_k\downarrow 0$, there exists a sequence $\{u_k\}_{k\in\N}\in L^G(\Omega;\CC)$ of minimizers of $\F_{\ve_k}$ which is precompact in $L^G(\Omega;\CC)$.

The proof of the next lemma is standard. We state it for future references and leave the proof to the reader.
\begin{lema}\label{lema1}
Let $\ve>0$, $G$ be a uniformly convex Orlicz function and $f\in L^{G^*}(\Omega;\CC)$. Then there exists a unique function $u\in W^{s,G}_{A,0}(\Omega)$ such that
$$
\F_\ve(u)= \inf_{v\in W^{s,G}_{A,0}(\Omega)} \F_\ve(v) = \min_{v\in W^{s,G}_{A,0}(\Omega)} \F_\ve(v).
$$
\end{lema}

Now, a simple consequence of Lemma \ref{lema.A.0} and \cite[Theorem 5.1]{FBS} gives the compactness of the sequence of minima. Again, the details of the proof are left to the readers.
\begin{lema}\label{lema2}
Let $\ve_k\downarrow 0$, and $\Omega\subset \R^n$ be an open bounded subset. Given $k\in\N$, let $u_k\in L^G(\Omega;\CC)$ be the minimum of $\F_{\ve_k}$. Then  $\{u_k\}_{k\in\N}\subset L^G(\Omega;\CC)$ is precompact.
\end{lema}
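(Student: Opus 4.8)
The plan is to reduce the precompactness of the minimizers to the compact embedding result already available, namely Theorem \ref{teo.comp} (equivalently, Lemma \ref{lema.A.0} combined with \cite[Theorem 3.1]{FBS}). First I would establish a uniform modular bound on the sequence $\{u_k\}_{k\in\N}$. Since $u_k$ minimizes $\F_{\ve_k}$ over $W^{s,G}_{A,0}(\Omega)$, comparing with the competitor $v\equiv 0$ gives $\F_{\ve_k}(u_k)\le \F_{\ve_k}(0)=0$, that is,
$$
(1-s_k) I_{s_k,G}^A(u_k) \le \int_\Omega \Re(f\bar u_k)\, dx \le \|f\|_{G^*} \|u_k\|_G,
$$
where $s_k=1-\ve_k$ and we used Hölder's inequality in Orlicz spaces. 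By the Poincaré inequality for the magnetic fractional spaces (the Corollary following Theorem \ref{poincare.s}), $\|u_k\|_G \le (1-s_k) C |u_k|_{s_k,G}^A$, so the right-hand side is controlled by a constant times $(1-s_k)|u_k|_{s_k,G}^A$. A standard argument relating the modular $I_{s_k,G}^A$ to the Luxemburg seminorm $|u_k|_{s_k,G}^A$ via the growth condition \eqref{cotas} (using that $G$ satisfies \eqref{lieberman}, hence $G$ and $G^*$ are $\Delta_2$) then turns this into an a priori bound $(1-s_k) I_{s_k,G}^A(u_k)\le C$ with $C$ independent of $k$; combined with the Poincaré inequality again this also yields $\sup_k \|u_k\|_G \le C$ and $\sup_k I_G(u_k)<\infty$.

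Next I would record that the magnetic Poincaré inequality also gives $\sup_k I_G^A(u_k)<\infty$ (indeed $I_G^A$ controls $I_G$ from below after rescaling, and here we only need the reverse direction, which is exactly the content of the bound above since $(1-s_k)I^A_{s_k,G}(u_k)$ dominates $I_G(u_k)$ up to constants). Hence the sequence $\{u_k\}$ is bounded in $W^{s,G}_A(\R^n)$ in the sense of Theorem \ref{teo.comp}, where each $u_k$ is viewed as extended by $0$ outside $\Omega$; note the fractional parameters $s_k$ vary, but the uniform bound $\sup_k\big(I_{s_k,G}^A(u_k)+I_G^A(u_k)\big)<\infty$ together with Lemma \ref{lema.A.0} transfers it to a uniform bound on $I_{s_k,G}(u_k)+I_G(u_k)$. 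Applying \cite[Theorem 3.1]{FBS} (the Fréchet–Kolmogorov type compactness, which applies with varying $s$ exactly as in \cite[Section 5]{FBS}) produces a subsequence converging in $L^G_{\text{loc}}(\R^n;\CC)$ to some $u$; since all $u_k$ are supported in $\overline\Omega$ and $\Omega$ is bounded, this is convergence in $L^G(\Omega;\CC)$, proving precompactness.

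The main obstacle is the fact that the fractional exponent $s_k$ is not fixed but tends to $1$, so one cannot directly invoke a single-space compact embedding; one must check that the a priori estimate is uniform in $k$ (which is why the factors $(1-s_k)$ and the precise form of the Poincaré constant matter) and that the compactness theorem from \cite{FBS} — proved there for fixed $s$ but extended in \cite[Section 5]{FBS} to handle families with $s_k\to 1$ via the scaling $(1-s_k)I_{s_k,G}$ — indeed applies in this regime. Once the uniform bound is in place, the rest is a routine invocation of the cited results, which is why the details are left to the reader as stated.
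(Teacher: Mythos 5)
Your proposal is correct and follows exactly the route the paper intends: a uniform a priori bound from $\F_{\ve_k}(u_k)\le\F_{\ve_k}(0)=0$ together with the magnetic fractional Poincar\'e inequality, then transfer to the non-magnetic modular via Lemma \ref{lema.A.0}, and finally the compactness theorem of \cite{FBS} in its form valid for $s_k\uparrow 1$ with the normalization $(1-s_k)I_{s_k,G}$ (i.e.\ \cite[Theorem 5.1]{FBS}, which is precisely what the paper invokes). You correctly identify the only real subtlety — that the fractional parameter varies, so the fixed-$s$ embedding of Theorem \ref{teo.comp} does not suffice on its own — and your sketch of the modular/norm bookkeeping needed for the uniform bound is the standard one.
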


As a corollary of Lemmas \ref{lema1} and \ref{lema2} and Theorem \ref{teo.Gamma} we obtain the following result.
\begin{thm}\label{teo.conv.minimos}
Let $G$ be a uniformly convex Orlicz function, $\Omega\subset\R^n$ be open and bounded and let $u_\ve\in  L^G(\Omega;\CC)$ be the minimum of $\F_\ve$. Then there exists $u\in L^G(\Omega;\CC)$ such that 
$$
u=\lim_{\ve\downarrow 0} u_\ve \text{ in } L^G(\Omega;\CC) \qquad \text{ and } \qquad \F(u)=\min_{v\in L^G(\Omega;\CC)} \F(v).
$$
\end{thm}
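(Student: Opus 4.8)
The plan is to deduce Theorem \ref{teo.conv.minimos} as a straightforward combination of the abstract $\Gamma$-convergence machinery (Theorem \ref{teo.Gamma}) with the concrete ingredients already assembled: the $\Gamma$-convergence $\F_\ve \stackrel{\Gamma}{\to} \F$ (which follows from Theorem \ref{teo.gamma.conv} since adding the continuous linear perturbation $u\mapsto \int_\Omega \Re(f\bar u)\,dx$ preserves $\Gamma$-convergence, cf.\ \cite[Proposition 6.21]{DalMaso}), the existence and uniqueness of minimizers $u_\ve$ of $\F_\ve$ (Lemma \ref{lema1}), and the precompactness in $L^G(\Omega;\CC)$ of any sequence of such minimizers along $\ve_k\downarrow 0$ (Lemma \ref{lema2}).

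First I would observe that the metric space is $X = L^G(\Omega;\CC)$ with the metric induced by the Luxemburg norm, and recall that by Lemma \ref{lema1} each $\F_\ve$ attains its infimum at a unique $u_\ve \in W^{s,G}_{A,0}(\Omega) \subset L^G(\Omega;\CC)$ (here uniform convexity of $G$ guarantees uniqueness). Next, given an arbitrary sequence $\ve_k\downarrow 0$, Lemma \ref{lema2} provides precompactness of $\{u_{\ve_k}\}_{k\in\N}$ in $L^G(\Omega;\CC)$, so the hypotheses of Theorem \ref{teo.Gamma} are met with $F_k = \F_{\ve_k}$ and $F = \F$. Applying Theorem \ref{teo.Gamma} we conclude that every accumulation point of $\{u_{\ve_k}\}$ minimizes $\F$ over $L^G(\Omega;\CC)$ and that $\inf_X \F = \lim_{k} \inf_X \F_{\ve_k}$.

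To upgrade accumulation points to a genuine limit, I would argue that the minimizer of $\F$ is unique: $\F$ is the sum of the convex, proper, lower semicontinuous functional $\J(w) = I^A_{1,\tilde G}(w)$ on $W^{1,\tilde G}_{A,0}(\Omega)$ and the continuous linear functional; since $\tilde G$ is comparable to $G$ (Remark \ref{rema}) and $G$ is uniformly convex, $I^A_{1,\tilde G}$ is strictly convex on its (convex) domain, and hence $\F$ has at most one minimizer $u$. Combined with precompactness this forces $u_{\ve_k}\to u$ in $L^G(\Omega;\CC)$ for the whole sequence; as $\ve_k\downarrow 0$ was arbitrary, $u_\ve\to u$ as $\ve\downarrow 0$. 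Finally, the identity $\F(u) = \min_{v\in L^G(\Omega;\CC)} \F(v)$ is exactly the conclusion of Theorem \ref{teo.Gamma}.

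The only genuinely delicate point is the strict convexity / uniqueness of the minimizer of the limit functional $\F$, which is what converts ``every accumulation point is a minimizer'' into ``the sequence converges''; this rests on Remark \ref{rema} ensuring $\tilde G$ inherits the relevant convexity properties from $G$, together with the strict convexity of the map $w\mapsto G(|\Re w|)+G(|\Im w|)$ coming from uniform convexity of $G$. Everything else is a direct citation of Theorem \ref{teo.Gamma}, Lemma \ref{lema1}, Lemma \ref{lema2}, and the stability of $\Gamma$-convergence under continuous perturbations.
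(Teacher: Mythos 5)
Your proof follows the same route as the paper, which states this theorem as a direct corollary of Lemmas \ref{lema1} and \ref{lema2} and Theorem \ref{teo.Gamma} (with $\F_\ve\stackrel{\Gamma}{\to}\F$ obtained from Theorem \ref{teo.gamma.conv} plus stability of $\Gamma$-convergence under continuous perturbations). You in fact supply a step the paper leaves implicit --- uniqueness of the minimizer of $\F$, which is what upgrades ``every accumulation point is a minimizer'' to convergence of the whole family $u_\ve$; the only caveat is that strict convexity of $\tilde G$ does \emph{not} follow from the two-sided comparability $c_1 G\le \tilde G\le c_2 G$ of Remark \ref{rema} (such bounds preserve neither convexity nor its strictness), but should instead be read off from the definition \eqref{phitilde} of $\tilde G$ as a limit of spherical averages of the uniformly convex $G$, together with the injectivity of $u\mapsto\nabla u-iAu$ on $W^{1,\tilde G}_{A,0}(\Omega)$ guaranteed by the diamagnetic--Poincar\'e inequality.
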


Finally, Theorem \ref{main3} is a trivial consequence of Theorem \ref{teo.conv.minimos}.

\section*{Acknowledgements}

This paper is partially supported by grants UBACyT 20020130100283BA, CONICET PIP 11220150100032CO and ANPCyT PICT 2012-0153.

All of the authors are members of CONICET.

Part of this paper was written while the first author was visiting the University of Nottingham at Ningbo, China (UNNC). He want to thank the UNNC for the kind hospitality.

\bibliographystyle{amsplain}
\bibliography{biblio}

\end{document}